\documentclass[11pt]{article}

\usepackage{amsmath,amssymb,amsthm}
\usepackage{graphicx}

\usepackage{geometry,graphicx,xcolor,booktabs,enumitem}
\usepackage[colorlinks=true,linkcolor=blue!60!black,citecolor=blue!60!black]{hyperref}
\usepackage{tikz}
\usetikzlibrary{arrows.meta,positioning}
\usepackage{xcolor}
\usepackage{color,soul}

\definecolor{myblue}{RGB}{90,140,200}

\definecolor{orange}{rgb}{0.8, .2, 0.0}

\definecolor{cR}{RGB}{33,102,172}
\definecolor{cS}{RGB}{214,96,77}
\definecolor{cA}{RGB}{26,150,65}
\definecolor{cB}{RGB}{178,24,43}

\theoremstyle{definition}
\newtheorem{definition}{Definition}[section]
\def\bR{{\mathbb R}}
\theoremstyle{plain}
\newtheorem{lemma}[definition]{Lemma}
\newtheorem{proposition}[definition]{Proposition}
\newtheorem{thmX}{Theorem}
 
\newtheorem{theorem}{Theorem}
\newtheorem{corollary}[definition]{Corollary}

\theoremstyle{remark}

\theoremstyle{conjecture}

\newcommand{\eps}{\varepsilon}
\newcommand{\KX}{\cK(X)}
\newcommand{\dH}{d_H}

\def\cC{{\cal C}}

\def\cF{{\cal F}}
\def\cG{{\cal G}}
\def\cK{{\cal K}}

\def\cI{{\cal I}}

\def\cO{{\cal O}}

\def\cR{{\cal R}}

\def\Gl{\cG_\cF}
\def\epsilon{\varepsilon}
\newcommand{\BF}{\bf\boldmath }

\def\Hto{\succcurlyeq_{H_\cF}}
\def\cC{{\cal C_{\cal F}}}
\def\Cto{\succcurlyeq_{\cC}}

\usepackage{lineno}
\newcommand{\beq}{\begin{linenomath}\begin{equation*}} 
\newcommand{\eeq}{\end{equation*}\end{linenomath}} 
\newcommand{\beqn}{\begin{linenomath}\begin{equation}} 
\newcommand{\eeqn}{\end{equation}\end{linenomath}} 
\def\Om{\Omega_\cF}
\def\P{\Phi_\cF^I}

\def\Pn{\Phi_\cF^{I_n}}

\def\Ceq{\overset{\cC}{=}}

\DeclareMathOperator{\Node}{Node}

\title{The Chain Graph of a general Iterated Function System}
\author{Roberto De Leo}
\date{\today}

\begin{document}
\maketitle

\begin{abstract}
    Classical fractal geometry describes the metric and topological structure of attractors generated by contractive Iterated Function Systems (IFSs). 
    Much less is known about their global qualitative dynamics once the contractive hypothesis is abandoned. 
    In this article we focus on IFSs with ``compact dynamics'', namely those IFSs for which there exists a non-empty compact set invariant under the Hutchinson map  that attracts every compact subset of the phase space. 
    We call such a set the ``global attractor'' of the IFS.
    We introduce the ``chain graph'' of a general IFS as a directed graph encoding the qualitative dynamics features of the IFS.
    For an IFS with compact dynamics, the chain graph contains at least one node. 
    Our main results are that the chain graph of an IFS with compact dynamics coincides with the chain graph of its restriction to its global attractor and that the chain graph of an IFS has at most as many nodes as the graph of its Hutchinson map.
\end{abstract}

\section{Introduction}

The main goal of the present article is to extend to Iterated Function Systems (IFSs) some ideas recently introduced by James Yorke and the present author in the study of the qualitative behavior of semiflows.

Historically, IFSs have been used quite successfully for the construction of deterministic fractals~\cite{Bar88} in the context of contractive maps, beginning with the seminal article of Hutchinson~\cite{Hut81} and the introduction of the general theory of contractive IFS by Michael Barnsley in~\cite{Bar88}. 
Recently, Barnsley and Andrew Vince~\cite{BV12,Vin13} and, independently, the present author~\cite{DL08,DL15,DL15b} started investigating some self-projective IFSs on real and complex projective spaces whose maps are not contractions.
At the same time, Barnsley et al. started investigating properties of ``general'' IFSs~\cite{BV11,BV13,BL14}, namely IFSs where the only requirement is that maps be continuous, mostly from the point of view of the {\em chaos game}.
In this article, we embrace precisely this point of view but our aim is somewhat complementary: we consider general IFSs as a dynamical system and introduce a tool, the chain graph, to study and represent the qualitative behavior of such systems. 

For simplicity, we restrict our attention to locally compact metric spaces. 
We do not require completeness and we assume instead that our IFSs have ``compact dynamics''.
More precisely, we require our IFSs to have a non-empty compact $H$-invariant (i.e. invariant under the Hutchinson map) set that attracts every compact subset of the phase space. 
Such a set is unique and we call it the global attractor.
We showed in~\cite{DLY26a} that the qualitative behavior of a semiflow with compact dynamics is equivalent to that of its restriction to its global attractor, and here we show that the same happens in case of IFSs.
This way, we switch the focus from the topological properties of the phase space to the dynamical properties of the IFS.

Given an IFS $\cF$ with compact dynamics on a locally compact metric space $X$, we use an adaptation of Charles Conley's ``$\eps$-chains'' to decompose the dynamics of $\cF$ into its ``recurrent'' and ``gradient'' components and represent the qualitative dynamics of $\cF$ in a graph. 

Our main results are the following.
Let $\cF$ be an IFS.
Then: 
1. $\cF$ has compact dynamics if and only if it has a compact global trapping region (Theorem~\ref{thm: compact dynamics = compact gl tr reg});
2. if $\cF$ has compact dynamics, then the chain-recurrent set, chain nodes and chain edges coincide with those of the restriction of $\cF$ to its global attractor (Theorem~\ref{prop: C = C glob}) and each chain node $N$ of $\cF$ is forward-invariant and its omega-limit is a non-empty, compact and $H$-invariant subset of $N$ (Theorem~\ref{thm: N is H-invariant});
3. $\cF$ has compact dynamics if and only if its Hutchinson map has compact dynamics (Theorem~\ref{thm: F comp dyn iff H comp dyn}) and the chain graph of its Hutchinson map has at least as many nodes as the chain graph of $\cF$ (Theorem~\ref{thm: H one node => F one node}).
As a concrete application, we show that the chain graphs of the Levitt-Yoccoz gasket and of its Hutchinson map have a single node and no edge (Theorem~\ref{thm: LY}). 

    \medskip
    The article is structured as follows.
    In Section~\ref{sec: IFS} we introduce the main definitions about the dynamics of an IFS.
    In Section~\ref{sec: global attractor} we introduce global attractors and trapping regions and prove their main properties. 
    In Section~\ref{sec: chains} we use chains to define the concept of ``being downstream'', with which we define chain nodes, chain edges and the chain graph.
    In Section~\ref{sec: Hutch}, we compare chain nodes and chain graph of an IFS with those of the corresponding Hutchinson map.
    Finally, in Section~\ref{sec: cubic gasket} we consider an important example of a non-contractive IFS, the Levitt-Yoccoz gasket, and prove that, just as in case of contractive IFSs, it has a single node.   
\section{Iterated Function Systems}
\label{sec: IFS}
We introduce several definitions and some related basic results on the qualitative dynamics of IFSs.
Our goal is to be able to describe the qualitative behavior of general IFSs, namely IFSs whose maps are not necessarily contractions.

\medskip\noindent
{\bf The phase space.} Throughout the article, {\BF$X$} will denote a {\bf metrizable locally compact} topological space. 
We will usually denote points in $X$ by $x,y,z$ and $d(x,y)$ will denote the distance between $x$ and $y$ for some metric $d$ compatible with the topology of $X$. 

\medskip\noindent
{\bf The multi-indices semigroup.} We denote by {\BF$\cI^m$} the free monoid generated by $\{1,\dots,m\}$ with identity 0.
Given $I=i_1\dots i_k\in\cI^m$, we denote by {\BF$|I|=k$} the number of indices in $I$ and call this number the {\BF length} of $I$.
We denote by {\BF $\cI^m_k$} the set of all indices of $\cI^m$ of length at least $k$.


\medskip\noindent
{\bf Iterated Function Systems.} 
%
    An {\bf iterated function system} (IFS) on a topological space $X$ is a finite collection
    $\cF = \{f_1,\dots,f_m\},$
    of continuous maps of $X$ into itself.
    Given $I=i_1\dots i_k\in\cI^m$, we set
    $
    \P = f_{i_1}\circ\dots\circ f_{i_k}.
    $
    An IFS $\cF$ determines a continuous semigroup action
    $
     \Phi_\cF: \cI^m \times X \to X
    $
    by
    $
     \Phi^I_\cF(x) = f_{i_1}\circ\dots\circ f_{i_k}(x).
    $
    In particular, this means that:
    \begin{enumerate}
        \item $\Phi_\cF^0=id_X$;
        \item $\Phi^I_\cF\circ\Phi^J_\cF=\Phi^{I\cdot J}_\cF$ for every $I,J\in\cI^m$.
    \end{enumerate}
    We call $\Phi_\cF$ the {\bf semiflow} associated to $\cF$.
    Notice that, when $m=1$, this reduces to the definition of {\em discrete-time semiflow} on $X$.
    Finally, we say that $\cF$ is {\bf contractive} when each of its maps $f_i$ is a contraction.
 
\medskip\noindent
{\bf Invariance under $\cF$.}
  We say that a set $A\subset X$ is {\bf forward-invariant} (resp. {\bf backward-invariant}) under $\cF$ if $\Phi^I_\cF(A)\subset A$ (resp. $\Phi^I_\cF(A)\supset A$) for every $I\in\cI^m$.
  Equivalently, if $f_i(A)\subset A$ (resp. $f_i(A)\supset A$) for every $i=1,\dots,m$.
  We say that a set $A$ is {\bf invariant} under $\cF$ if it is both backward- and forward-invariant, namely if $\Phi^I_\cF(A)=A$ for every $I\in\cI^m$.
  Finally, we write {\BF$H_\cF(A)=\cup_{i=1}^m f_i(A)$} and say that $A$ is {\bf $H$-invariant} under $\cF$ if $H_\cF(A)=A$.

\medskip\noindent
{\bf Limit sets and recurrence.}
    We denote by $\text{\BF$\cO_\cF(x)$}=\{\P(x):I\in\cI^m\}$ the orbit of $x\in X$ under $\cF$.
    The limit set of a point $x\in X$ under $\cF$ is the set
    \beq
    \text{\BF$\Om(x)$} = \{y: \text{ there are }\; I_n\in\cI^m\text{ such that }|I_n| \to\infty\text{ and }\Phi^{I_n}_\cF(x)\to y\}.
    \eeq
    Given a subset $A\subset X$, we define
    \beq
    \text{\BF$\Om(A)$} = \{y:\text{ there are }I_n\in\cI^m\text{ and }a_n\in A\text{ such that }|I_n|\to\infty\text{ and }\Phi^{I_n}_\cF(a_n)\to y\}.
    \eeq
    Notice that, for a singleton $A=\{x\}$, the definition reduces to the limit set of a point, so $\Om(\{x\})=\Om(x)$.
    We say that $x$ is {\bf fixed} for $\cF$, or that $x$ is a {\bf fixed-point} of $\cF$, if $\cO_\cF(x)=\{x\}$.
    Finally, we say that $x$ is {\bf recurrent} under $\cF$ if, for every $z\in\cO_\cF(x)$,
    $x\in\Om(z)$ and we denote by {\BF$\cR_\cF$} the set of all points that are recurrent under $\cF$.

    \medskip
    Notice that this definition of recurrence reduces to the standard one for semiflows when $m=1$.

\begin{proposition}
    \label{prop: x in R => x in Om(x)}
    Let $x\in\cR_\cF$. 
    Then $x\in\Omega_\cF(x)$.
\end{proposition}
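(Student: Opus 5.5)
The argument is immediate from the definition of recurrence once we observe that $x$ itself lies in its own orbit. Since $\Phi^0_\cF=id_X$ and $0\in\cI^m$, we have $x=\Phi^0_\cF(x)\in\cO_\cF(x)$. The definition of $x\in\cR_\cF$ requires $x\in\Om(z)$ for \emph{every} $z\in\cO_\cF(x)$. Taking $z=x$ gives $x\in\Om(x)$ directly.

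For completeness I would also spell out what this means concretely. There exist multi-indices $I_n\in\cI^m$ with $|I_n|\to\infty$ and $\Phi^{I_n}_\cF(x)\to x$, which is exactly the defining condition of $\Om(x)$ recorded in Section~\ref{sec: IFS}. No limiting argument or compactness is needed.

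The only point requiring care is the convention that the identity $0$ of the free monoid $\cI^m$ counts as a multi-index, of length $0$. This makes the orbit $\cO_\cF(x)=\{\Phi^I_\cF(x):I\in\cI^m\}$ contain $x$. This is guaranteed by the paper's definition of $\cI^m$ as the free monoid with identity $0$ and by the property $\Phi^0_\cF=id_X$, so I expect no real obstacle.

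If instead one wanted to avoid the $I=0$ case, one could take $z=f_i(x)$ for some $i$. Then $x\in\Om(z)$ gives $I_n$ with $|I_n|\to\infty$ and $\Phi^{I_n}_\cF(f_i(x))=\Phi^{I_n\cdot i}_\cF(x)\to x$, where $|I_n\cdot i|=|I_n|+1\to\infty$. This again shows $x\in\Om(x)$, so the result is robust to that convention.
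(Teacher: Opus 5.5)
Your proof is correct: since $0\in\cI^m$ and $\Phi^0_\cF=id_X$, we have $x\in\cO_\cF(x)$, so applying the definition of recurrence with $z=x$ gives $x\in\Om(x)$ immediately. The paper's proof takes an arbitrary orbit point $z=\Phi^I_\cF(x)$ and uses the multi-indices $J_nI$, which is exactly your fallback argument; your main argument is the $I=0$ case of it, so the two approaches are essentially the same.
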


\begin{proof}
    Let $z\in\cO_\cF(x)$.
    Since $x\in\cR_\cF$, there are $J_n\in\cI^m$ with $|J_n|\to\infty$ such that $\Phi_\cF^{J_n}(z)\to x$.
    Let $I\in\cI^m$ such that $z=\P(x)$.
    Then $\Phi_\cF^{J_nI}(x)\to x$ with $|J_nI|>|J_n|\to\infty$, namely $x\in\Om(x)$.
\end{proof}

It is well-known that, when $m=1$, the inverse of Proposition~\ref{prop: x in R => x in Om(x)} holds as well. 
For $m>1$, it does not.
For example, just consider the IFS $\cF=\{f_1,f_2\}$ on $X=\{0,1\}$ with $f_1(0)=0,f_1(1)=1,f_2(0)=f_2(1)=1$.
Then $0\in\Om(0)$ since $f_1^k(0)=0$ for all $k=1,2,\dots$ but $0$ is not recurrent since $\cO_\cF(0)=\{0,1\}$ and there is no way to get close to 0 from 1.

\begin{proposition}
\label{prop: forward invariant}
The sets $\cR_\cF$ and $\Om(A)$, $A\subset X$, are forward-invariant (and so is $\Om(x)$ for all $x\in X$).
\end{proposition}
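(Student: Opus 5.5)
It suffices to check forward-invariance under each single map $f_i$, since forward-invariance under $\cF$ is equivalent to $f_i(A)\subset A$ for every $i=1,\dots,m$. The case of $\Om(x)$ is the special case $A=\{x\}$ of the statement for $\Om(A)$, so two claims need proof: $f_i(\Om(A))\subset\Om(A)$ and $f_i(\cR_\cF)\subset\cR_\cF$.

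For $\Om(A)$, I would take $y\in\Om(A)$. By definition there are $I_n\in\cI^m$ with $|I_n|\to\infty$ and $a_n\in A$ such that $\Phi^{I_n}_\cF(a_n)\to y$. By continuity of $f_i$, $f_i(\Phi^{I_n}_\cF(a_n))\to f_i(y)$. The semigroup property gives $f_i\circ\Phi^{I_n}_\cF=\Phi^{iI_n}_\cF$, and $|iI_n|=|I_n|+1\to\infty$. Hence $f_i(y)\in\Om(A)$.

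For $\cR_\cF$, I would take $x\in\cR_\cF$ and set $x'=f_i(x)$. We must show that $x'\in\Om(z)$ for every $z\in\cO_\cF(x')$. Such a $z$ equals $\Phi^J_\cF(x')=\Phi^{Ji}_\cF(x)$, so $z\in\cO_\cF(x)$. Since $x$ is recurrent, $x\in\Om(z)$. By the previous paragraph applied with $A=\{z\}$, $\Om(z)$ is forward-invariant, so $x'=f_i(x)\in\Om(z)$. Hence $x'\in\cR_\cF$.

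The main point to be careful about is this last step. Recurrence must be checked against every point of the orbit of $f_i(x)$, not merely $x$ itself. The argument closes because $\cO_\cF(f_i(x))\subset\cO_\cF(x)$, together with the forward-invariance of limit sets of points established first. There is no genuine obstacle here beyond ordering the two claims correctly.
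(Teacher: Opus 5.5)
Your argument is correct and complete. The paper states this proposition without proof, and what you give is the standard argument it implicitly relies on: continuity of $f_i$ together with $f_i\circ\Phi^{I}_\cF=\Phi^{iI}_\cF$ handles $\Om(A)$, and the inclusion $\cO_\cF(f_i(x))\subset\cO_\cF(x)$ combined with forward-invariance of each $\Om(z)$ handles $\cR_\cF$, in exactly the order you present.
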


\medskip\noindent
{\bf Example: the recurrent set of contractive IFSs.}
Contractive IFSs are one of the most studied kinds of IFS in literature. 
It is well-known~\cite{Hut81,Bar88} that a contractive IFS over a complete metric space admits a unique non-empty $H$-invariant compact set $K$ and that $\Om(z)=K$ for every $z\in X$. 
Then no $x$ outside of $K$ can be recurrent and all $x\in K$ are recurrent, so for contractive IFSs we have that $\cR_\cF=K$.

\medskip\noindent
{\bf Limit sets are closed.} The following standard topological characterization of the limit set shows that every limit set is closed.
\begin{proposition}[Topological description of the limit set]
\label{prop: topological description of Om}
\[
\Om(A)
=
\bigcap_{n\ge 0}\overline{\bigcup_{|I|\ge n} \P(A)}.
\]
In particular, if $A$ is closed then so is $\Om(A)$, and so $\Om(x)$ is closed for every $x\in X$.
\end{proposition}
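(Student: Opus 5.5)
We prove the set equality by double inclusion, working in the metric $d$ fixed on $X$. Write $B_n=\bigcup_{|I|\ge n}\P(A)$, so the claim is $\Om(A)=\bigcap_{n\ge0}\overline{B_n}$. Since $\cI^m_k$ shrinks as $k$ grows, the sets $B_n$ are decreasing in $n$, and therefore so are their closures. Throughout we only use sequential characterizations of closure, which are available because $X$ is metrizable.

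For the inclusion $\subset$, let $y\in\Om(A)$. Then there are $I_k\in\cI^m$ and $a_k\in A$ with $|I_k|\to\infty$ and $\Phi^{I_k}_\cF(a_k)\to y$. Fix $n$. For all large $k$ we have $|I_k|\ge n$, so $\Phi^{I_k}_\cF(a_k)\in B_n$. Hence $y$ is a limit of points of $B_n$, so $y\in\overline{B_n}$. As $n$ was arbitrary, $y$ lies in the intersection.

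For the inclusion $\supset$, let $y\in\bigcap_n\overline{B_n}$. For each $n$, since $y\in\overline{B_n}$, we can pick a point of $B_n$ within distance $1/n$ of $y$. Such a point has the form $\Phi^{I_n}_\cF(a_n)$ with $a_n\in A$ and $|I_n|\ge n$. Then $|I_n|\to\infty$ and $\Phi^{I_n}_\cF(a_n)\to y$, which is exactly the definition of $y\in\Om(A)$.

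The consequences follow at once. Any intersection of closures is closed, so $\Om(A)$ is closed for every $A\subset X$; the closedness hypothesis on $A$ in the statement is not actually needed. Taking $A=\{x\}$ and using $\Om(\{x\})=\Om(x)$ shows that $\Om(x)$ is closed. No step presents a real obstacle; the only care required is the diagonal choice of the sequence $(I_n,a_n)$ in the second inclusion, which needs only a countable neighborhood base at $y$.
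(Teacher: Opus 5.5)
Your double-inclusion argument is correct, and it is the standard proof; the paper calls this characterization standard and gives no proof of its own, so there is nothing to compare beyond that. Your side remark is also right: $\Om(A)$ is an intersection of closed sets for every $A\subset X$, so the hypothesis that $A$ be closed is superfluous (as a cosmetic fix, use distance $1/(n+1)$ in the second inclusion, or start at $n=1$, to avoid $1/0$).
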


\medskip\noindent
{\BF $H$-invariance plays the role of invariance in IFSs with $m>1$.}
When $m=1$, invariance is a fundamental concept: when $X$ is compact, or even just when a semiflow has compact dynamics (see Section~\ref{sec: global attractor}), the global attractor is invariant and so are the chain recurrent set and each of its nodes (see Section~\ref{sec: chains}).
On the contrary, when $m>1$, having invariant sets is a strong condition and often an IFS has no invariant set.
Next sections show that, for $m>1$, the role of invariance is played by H-invariance.

\section{Global attractor of an IFS}
\label{sec: global attractor}
%
Just as in case of semiflows, the IFSs with richest asymptotics are those whose dynamics is, in some sense, ``compact''.
Of course this includes all IFSs defined over a compact space $X$ but requiring $X$ to be compact is unnecessarily strong -- it is enough that the orbits of the system are attracted by and stay within some compact region. 
For instance, this is precisely what happens in case of the celebrated Lorenz ``butterfly'' system~\cite{Lor63}. 
In this section, we give a precise definition of the IFSs that can be studied within our framework.

\medskip\noindent
{\bf Attracting and absorbing.}
    We say that a set $A\subset X$ {\bf absorbs} a set $K$ under $\cF$ if
    there exists $\tau>0$ such that $\P(K)\subset A$ for every $I\in\cI^m_\tau$.
    Given an $\eps>0$, we set
    $\text{\BF$N_\eps(A)$}=\{x : d(x,A)<\eps\}$.
    We say that $A$ {\bf attracts} $K$ under $\Phi_\cF$ if, for every $\eps>0$, there exists $\tau>0$ such that $\P(K)\subset N_\eps(A)$ for all $I\in\cI^m_\tau$.

\medskip\noindent
{\bf Global Attractor.}    
    We call {\bf global attractor} of $\cF$ a set $\cG\subset X$ that is compact and $H$-invariant and  attracts under $\cF$ each compact subset of $X$.
Next proposition shows that such a set, when it exists, is unique.
%
\begin{proposition}
    \label{prop: G is unique}
    Assume that $\cF$ has global attractors $\cG$ and $\cG'$. 
    Then $\cG=\cG'$. 
\end{proposition}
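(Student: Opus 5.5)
Both $\cG$ and $\cG'$ are compact, so each attracts the other, and $H$-invariance turns this into inclusions. The one fact needed is that $H$-invariance iterates: from $H_\cF(\cG)=\cG$, induction on $k$ gives
\[
\bigcup_{|I|=k}\P(\cG)=H_\cF^k(\cG)=\cG\quad\text{for every }k\ge 1 .
\]
Indeed, $H_\cF^{k+1}(\cG)=\bigcup_i f_i\bigl(\bigcup_{|J|=k}\Phi_\cF^J(\cG)\bigr)$, and this equals $\bigcup_{|I|=k+1}\P(\cG)$. Consequently every $x\in\cG$ lies in $\P(\cG)$ for some multi-index $I$ of any prescribed length $k$.

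Now I show $\cG\subset\cG'$. Since $\cG$ is compact and $\cG'$ attracts compact sets, for each $\eps>0$ there is $\tau>0$ with $\P(\cG)\subset N_\eps(\cG')$ for all $I\in\cI^m_\tau$. Fix $x\in\cG$ and pick $k\ge\tau$. By the identity above, $x\in\P(\cG)$ for some $I$ with $|I|=k$, so $x\in N_\eps(\cG')$, i.e.\ $d(x,\cG')<\eps$. As $\eps$ is arbitrary, $d(x,\cG')=0$. Since $\cG'$ is compact, hence closed, $x\in\cG'$. So $\cG\subset\cG'$.

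Exchanging the roles of $\cG$ and $\cG'$ gives $\cG'\subset\cG$, hence $\cG=\cG'$. No step is a real obstacle. The only point needing care is that attraction controls the images $\P(\cG)$ and not $\cG$ itself. The bridge is precisely the iterated identity $H_\cF^k(\cG)=\cG$, which is where $H$-invariance (rather than full invariance, which may fail for $m>1$) is used.
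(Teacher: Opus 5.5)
Your proof is correct and follows the paper's argument. One attractor attracts the other (compact) attractor, the iterated identity $H_\cF^k(\cG)=\bigcup_{|I|=k}\Phi_\cF^I(\cG)=\cG$ turns this into $\cG\subset N_\eps(\cG')$ for every $\eps>0$, closedness gives the inclusion, and symmetry finishes; your explicit verification of that identity simply spells out a step the paper leaves implicit.
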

\begin{proof}
Assume that $\cG$ and $\cG'$ are two global attractors of $\cF$. By definition, both $\cG$ and $\cG'$ are compact, $H$-invariant, and attract every compact subset of $X$.
Since $\cG$ is a global attractor, it attracts $\cG'$. 
Hence, for every $\eps>0$, there is a $N>0$ such that $\P(\cG')\subset N_\eps(\cG)$ for every $I\in\cI^m_N$.
In particular, $H_\cF^n(\cG')\subset N_\eps(\cG)$ for every $n\geq N$.
Since $\cG'$ is $H$-invariant, this means that $\cG'\subset N_\eps(\cG)$ for every $\eps>0$.
Hence, $\cG'\subset\cG$.
By symmetry, $\cG=\cG'$.
%
\end{proof}

The proof of the following important characterization of the global attractor is similar to the one above and we leave it to the reader.
\begin{proposition}
    \label{prop: G_F is H-invariant}
    The global attractor is the minimal compact forward-invariant set of $\cF$ that attracts all compact subsets of $X$.
\end{proposition}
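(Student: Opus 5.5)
We need to show two things. First, the global attractor $\cG$ is itself a compact forward-invariant set attracting all compact subsets. Second, any compact forward-invariant set $K$ attracting all compact sets contains $\cG$.

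\textbf{$\cG$ has the required properties.} Compactness and the attraction property hold by definition. Forward-invariance follows from $H$-invariance: for each $i$ we have $f_i(\cG)\subset\bigcup_j f_j(\cG)=H_\cF(\cG)=\cG$. Hence $\P(\cG)\subset\cG$ for every $I\in\cI^m$.

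\textbf{Minimality.} Let $K$ be compact, forward-invariant, and attracting every compact subset of $X$. We follow the proof of Proposition~\ref{prop: G is unique}, with the roles arranged so that $K$ attracts $\cG$.

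\begin{itemize}
\item Since $\cG$ is compact, $K$ attracts it. So for each $\eps>0$ there is $N$ such that $\P(\cG)\subset N_\eps(K)$ for all $I\in\cI^m_N$.
\item For every $n$, $H^n_\cF(\cG)=\bigcup_{|I|=n}\P(\cG)$, by induction on $n$ using $H_\cF(\bigcup_\alpha A_\alpha)=\bigcup_\alpha H_\cF(A_\alpha)$.
\item By $H$-invariance, $\cG=H_\cF^n(\cG)$. Taking $n=N$ gives $\cG\subset N_\eps(K)$ for every $\eps>0$.
\item Since $K$ is compact, hence closed, $\bigcap_{\eps>0}N_\eps(K)=\overline K=K$. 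Therefore $\cG\subset K$.
\end{itemize}

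So $\cG$ is contained in every such $K$, and it is itself such a set. This makes it the minimum of the family, hence the unique minimal element. Note that forward-invariance of $K$ is not actually used in the minimality step; the hypothesis only matters for $\cG$ belonging to the family.

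The only point needing care is the identity $H^n_\cF(\cG)=\bigcup_{|I|=n}\P(\cG)$. It is what lets us transfer the attraction estimate, which is stated for individual words $I$, to $\cG$ via $H$-invariance. The identity is routine, so I expect no genuine obstacle.
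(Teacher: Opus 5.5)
Your proof is correct and follows essentially the route the paper intends: the paper leaves this to the reader as a variant of the uniqueness argument in Proposition~\ref{prop: G is unique}. That is exactly what you do, deriving forward-invariance from $H$-invariance and then obtaining $\Gl\subset N_\eps(K)$ for every $\eps$ from $H_\cF^n(\Gl)=\Gl$ and the closedness of $K$. Your remark that forward-invariance of $K$ is never used, so that $\Gl$ is in fact the minimum of the family, is also accurate.
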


\medskip\noindent
{\bf Example: the global attractor of a contractive IFS.}
A contractive IFS over a complete metric space has a unique $H$-invariant set $K$, which coincides with its recurrent set and with the unique attractor of its Hutchinson map.
Hence, $K$ is also the global attractor of the IFS.

\medskip\noindent
{\bf Compact Dynamics.}
    We say that $\cF$ 
    has {\bf compact dynamics} if it has a global attractor.
    From now on, we denote such set by $\Gl$.

\medskip\noindent
{\bf Compact dynamics, limit sets and recurrence.} We show below that compact dynamics grants the existence of recurrent points and that these points lie in the global attractor.

\begin{lemma}
    \label{lemma: Om}
    Assume that $\cF$ has compact dynamics. Then, for every $x\in X$, $\Om(x)\cap \cR_\cF \neq \varnothing$.
    In particular, $\cR_\cF\neq\emptyset$ and, for every $x\in X$, $\Om(x)\neq\emptyset$.
\end{lemma}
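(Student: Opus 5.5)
The plan is to find a recurrent point inside a minimal closed forward-invariant piece of $\Om(x)$, chosen by Zorn's lemma.

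\textbf{Step 1: limit sets are non-empty and compact.} Since $X$ is locally compact and $\Gl$ is compact, I fix $\eps_0>0$ such that $\overline{N_{\eps_0}(\Gl)}$ is compact. Because $\Gl$ attracts the compact set $\{x\}$, there is $\tau$ with $\P(x)\in N_{\eps_0}(\Gl)$ for all $I\in\cI^m_\tau$.
\begin{itemize}
\item Non-empty: the sequence $f_1^n(x)$ eventually lies in this compact set, so it has a convergent subsequence, and hence $\Om(x)\neq\emptyset$.
\item Inside $\Gl$: the same attraction argument with arbitrary $\eps$ gives $\Om(x)\subset\bigcap_\eps\overline{N_\eps(\Gl)}=\Gl$.
\item Compact: $\Om(x)$ is closed by Proposition~\ref{prop: topological description of Om}, and it lies in the compact set $\Gl$.
\end{itemize}
The same argument applies to every $z\in X$.

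\textbf{Step 2: a monotonicity fact.} If $y\in\Om(z)$, then $\Om(y)\subset\Om(z)$. Indeed, write $y=\lim\Phi_\cF^{I_n}(z)$ with $|I_n|\to\infty$. By continuity, for every $J$,
\[
\Phi^J_\cF(y)=\lim\Phi^{JI_n}_\cF(z),
\]
and $|JI_n|\to\infty$, so $\cO_\cF(y)\subset\Om(z)$. Since $\Om(z)$ is closed, Proposition~\ref{prop: topological description of Om} then gives $\Om(y)\subset\overline{\cO_\cF(y)}\subset\Om(z)$.

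\textbf{Step 3: the family and a minimal element.} Let $\mathcal{M}$ be the family of non-empty compact sets $K\subset\Om(x)$ that are forward-invariant and satisfy $\Om(y)\subset K$ for all $y\in K$.
\begin{itemize}
\item $\Om(x)\in\mathcal{M}$: it is non-empty and compact by Step 1, forward-invariant by Proposition~\ref{prop: forward invariant}, and satisfies the last condition by Step 2.
\item Chains have lower bounds: order $\mathcal{M}$ by reverse inclusion. The intersection of a chain is non-empty by the finite intersection property of compact sets, and it is compact. It is forward-invariant, and each of its points $y$ has $\Om(y)$ contained in every member of the chain, hence in the intersection. So the intersection lies in $\mathcal{M}$.
\end{itemize}
Zorn's lemma then yields a minimal element $M\in\mathcal{M}$.

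\textbf{Step 4: every point of $M$ is recurrent.} Take $p\in M$ and any $z\in\cO_\cF(p)$. Forward invariance of $M$ gives $z\in M$.
\begin{itemize}
\item $\Om(z)\subset M$ by the defining property of $M$.
\item $\Om(z)$ is non-empty and compact by Step 1.
\item $\Om(z)$ is forward-invariant by Proposition~\ref{prop: forward invariant}.
\item $\Om(w)\subset\Om(z)$ for every $w\in\Om(z)$, by Step 2.
\end{itemize}
Hence $\Om(z)\in\mathcal{M}$, and minimality forces $\Om(z)=M\ni p$. Since $z\in\cO_\cF(p)$ was arbitrary, $p\in\cR_\cF$, and so $p\in\Om(x)\cap\cR_\cF$. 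The statements $\cR_\cF\neq\emptyset$ and $\Om(x)\neq\emptyset$ follow immediately.

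The main obstacle is choosing $\mathcal{M}$ so that it is closed under intersections of chains and so that minimality actually yields $p\in\Om(z)$ for every $z$ in the orbit of $p$. Including both forward invariance and the condition ``$\Om(y)\subset K$ for $y\in K$'' in the definition of $\mathcal{M}$ is what makes both requirements hold at once.
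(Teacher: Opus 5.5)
Your proof is correct and follows the paper's argument: Zorn's lemma gives a minimal non-empty compact forward-invariant $M\subset\Om(x)$, and minimality forces $\Om(z)=M\ni p$ for every $z\in\cO_\cF(p)$. The extra condition ``$\Om(y)\subset K$ for $y\in K$'' in your family $\mathcal{M}$ is harmless but redundant, because any closed forward-invariant $K$ with $y\in K$ already satisfies $\Om(y)\subset\overline{\cO_\cF(y)}\subset K$.
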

\begin{proof}
    Let \(x\in X\). 
    Since \(\Gl\) is compact and \(X\) is locally compact, $\Gl$ has a compact neighbourhood \(K\). 
    Since \(\Gl\) attracts  \(\{x\}\), there exists \(n_0\ge 0\) such that $\P(x)\in K$ for $|I|\ge n_0$.
    Hence, $\Om(x)=\cap_{n\ge n_0}\overline{\bigcup_{|I|\ge n}\P(x)}$ is a decreasing intersection of non-empty compact subsets of \(K\) and so is non-empty and compact. By Proposition~\ref{prop: forward invariant} it is also forward-invariant.
    Now choose, by Zorn's lemma, a minimal non-empty compact forward-invariant subset $M\subset \Omega_F(x)$.
    We claim that every point of \(M\) is recurrent. Let \(p\in M\) and let \(z\in O_F(p)\). 
    Since \(M\) is forward-invariant, \(z\in M\), and hence $\Omega_F(z)\subset M$.
    Moreover, \(\Omega_F(z)\) is non-empty, compact and forward-invariant.
    By the minimality of \(M\), we must have $\Omega_F(z)=M$.   
    In particular, $p\in \Omega_F(z)$.
    Since this holds for every \(z\in O_F(p)\), we have \(p\in R_F\).
    Thus, \(M\subset R_F\), and, since \(M\subset\Omega_F(x)\), we obtain
    $\Omega_F(x)\cap R_F\neq\varnothing$.
    The remaining assertions follow immediately.
\end{proof}
\begin{proposition}
    \label{prop: omega(K) subset G}
    Assume that $\cF$ has compact dynamics.
    Then 
    $\Om(K)\subset\Gl$ for every compact set $K\subset X$.
    In particular, $\Om(x)\subset\Gl$ for every $x\in X$.
    Moreover, if $K$ is forward-invariant, $\Om(K)$ is $H$-invariant.
\end{proposition}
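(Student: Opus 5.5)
The argument has three parts: first $\Om(K)\subset\Gl$ via the attraction property, then the pointwise case as the special case $K=\{x\}$, and finally $H$-invariance when $K$ is forward-invariant. For the first part, take $y\in\Om(K)$, so there are $a_n\in K$ and $I_n\in\cI^m$ with $|I_n|\to\infty$ and $\Phi^{I_n}_\cF(a_n)\to y$. Since $\Gl$ attracts the compact set $K$, for every $\eps>0$ we have $\Phi^{I_n}_\cF(a_n)\in N_\eps(\Gl)$ for all large $n$. Hence $d(y,\Gl)\le\eps$ for every $\eps>0$, and since $\Gl$ is compact, hence closed, $y\in\Gl$. The pointwise statement is the case $K=\{x\}$.

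For $H$-invariance, assume $K$ is forward-invariant. I would first record that $\Om(K)$ is compact. It is closed by Proposition~\ref{prop: topological description of Om}, and it lies in the compact set $\Gl$. Note also that $\Om(K)$ may be empty if $K=\emptyset$; in that case the claim $H_\cF(\emptyset)=\emptyset$ is trivial, so assume $K\neq\emptyset$.

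The inclusion $H_\cF(\Om(K))\subset\Om(K)$ is the forward invariance from Proposition~\ref{prop: forward invariant}. Concretely, if $\Phi^{I_n}_\cF(a_n)\to y$, then by continuity $\Phi^{iI_n}_\cF(a_n)\to f_i(y)$, with $|iI_n|\to\infty$.

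The reverse inclusion $\Om(K)\subset H_\cF(\Om(K))$ is the main step. Take $y\in\Om(K)$ with $\Phi^{I_n}_\cF(a_n)\to y$, and assume $|I_n|\ge 1$ for all $n$. Write $I_n=i_nJ_n$. Passing to a subsequence, we may take $i_n=i$ constant, since there are only finitely many indices. Set $b_n=\Phi^{J_n}_\cF(a_n)$. Then $|J_n|\to\infty$ and $f_i(b_n)\to y$.

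I need the $b_n$ to accumulate somewhere. Let $U$ be a compact neighbourhood of $\Gl$, which exists by local compactness. Attraction of $K$ gives $\tau$ such that $\Phi^{J}_\cF(K)\subset U$ for all $|J|\ge\tau$. Hence $b_n\in U$ for large $n$, and a subsequence $b_n\to z$. By definition $z\in\Om(K)$, since $|J_n|\to\infty$ and $a_n\in K$. Continuity then gives $f_i(z)=y$, so $y\in H_\cF(\Om(K))$.

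Forward invariance of $K$ is not actually needed for this argument. The only delicate point is the compactness extraction, which uses attraction of the compact set $K$ rather than mere compactness of $K$.
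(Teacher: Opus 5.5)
Your proof is correct. The first part, including the case $K=\{x\}$, is exactly the paper's argument.

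For $H$-invariance the paper only says ``a standard argument''. That argument is the one written out in Proposition~\ref{thm: Q => G}: peel off the outermost index $I_n=i_nJ_n$, fix $i_n=i$ along a subsequence, and extract a convergent subsequence of $\Phi^{J_n}_\cF(a_n)$. There, the compactness needed for the extraction comes from the hypothesis on $K$: if $K$ is compact and forward-invariant, the points $\Phi^{J_n}_\cF(a_n)$ stay in $K$.

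You do the same peeling but get the compactness from attraction instead: the intermediate points eventually lie in a compact neighbourhood $U$ of $\Gl$. To go from attraction, which is stated in terms of the sets $N_\eps(\Gl)$, to $\Phi^J_\cF(K)\subset U$, you should note that $N_\eps(\Gl)\subset U$ for some $\eps>0$ by compactness of $\Gl$. The paper uses the same step in Lemma~\ref{lemma: Om}.

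The two routes trade hypotheses. The paper's does not use compact dynamics for the invariance claim, so it holds for any compact forward-invariant $K$ in any IFS. Yours uses compact dynamics but shows that forward-invariance of $K$ is superfluous: $\Om(K)$ is $H$-invariant, and non-empty when $K\neq\varnothing$, for every compact $K$. Under the hypotheses of this proposition yours is the stronger statement. For example, Theorem~\ref{thm: N is H-invariant} would then follow from compactness of $N$ alone.
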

\begin{proof}


Let $K\subset X$ be compact. 
Since $\Gl$ attracts $K$, $d(\P(K),\Gl)\to 0$ for $|I|\to\infty$.
If $y\in\Om(K)$, there exist $x_n\in K$ and multiindices $I_n$ with $|I_n|\to\infty$ such that $\Pn(x_n)\to y$.
Hence, $y\in N_\eps(\Gl)$ for every $\eps>0$. 
Since $\Gl$ is closed, this is possible only if $y\in \Gl$. 
Therefore, $\Om(K)\subset \Gl$.
When $K$ is forward-invariant, a standard argument shows that every element of $\Om(K)$ is the image under some $f_i$ of an element of $\Om(K)$, so $H_\cF(\Om(K))=\Om(K)$.
\end{proof}

Notice that, in general, one cannot conclude that every point of $\Om(x)$ is recurrent; the lemma only guarantees the existence of at least one recurrent point in $\Om(x)$.


%

%

%
{\bf 
%

%
%

%
}

\begin{proposition}
\label{prop: R subset G}
Assume that $\cF$ has compact dynamics.
Then $\cR_\cF\subset\Gl$.
\end{proposition}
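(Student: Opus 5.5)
The plan is to combine Proposition~\ref{prop: x in R => x in Om(x)} with Proposition~\ref{prop: omega(K) subset G}; the whole argument is a two-step chain of inclusions.

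\textbf{Step 1.} Let $x\in\cR_\cF$. By Proposition~\ref{prop: x in R => x in Om(x)}, $x\in\Om(x)$. This uses only recurrence with $z=x$ itself, which is legitimate because $x=\Phi^0_\cF(x)\in\cO_\cF(x)$, together with a reindexing of multi-indices.

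\textbf{Step 2.} Since $\cF$ has compact dynamics, the singleton $\{x\}$ is compact. Proposition~\ref{prop: omega(K) subset G} then gives $\Om(x)=\Om(\{x\})\subset\Gl$.

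\textbf{Conclusion.} Hence $x\in\Gl$. Since $x\in\cR_\cF$ was arbitrary, $\cR_\cF\subset\Gl$.

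There is no real obstacle here. The only point to check is that Proposition~\ref{prop: omega(K) subset G} applies to singletons, which it explicitly does. If one preferred a self-contained argument instead of citing it, one would proceed directly:
\begin{itemize}
\item Pick $J_n$ with $|J_n|\to\infty$ and $\Phi^{J_n}_\cF(x)\to x$.
\item Use that $\Gl$ attracts $\{x\}$, so $d(\Phi^{J_n}_\cF(x),\Gl)\to0$.
\item Conclude $d(x,\Gl)=0$, and hence $x\in\Gl$ because $\Gl$ is closed, being compact in a metric space.
\end{itemize}
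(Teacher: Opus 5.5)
Your proposal is correct and follows the paper's proof exactly: the paper combines Proposition~\ref{prop: x in R => x in Om(x)} (giving $x\in\Om(x)$) with Proposition~\ref{prop: omega(K) subset G} applied to the compact singleton $\{x\}$. One small wording point: the singleton $\{x\}$ is compact regardless of the dynamics; compact dynamics is the hypothesis needed to invoke Proposition~\ref{prop: omega(K) subset G}, not to make $\{x\}$ compact.
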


\begin{proof}
Let $x \in \cR_\cF$. By Proposition~\ref{prop: x in R => x in Om(x)}, $x \in \Om(x)$.
Since $\{x\}$ is compact and $\Gl$ attracts every compact subset of $X$,
Proposition~\ref{prop: omega(K) subset G} implies
$
\Om(x) \subset \Gl.
$
Hence $x \in \Gl$, and therefore
$\cR_\cF \subset \Gl.$
\end{proof}

\medskip\noindent
{\bf Trapping regions.}
Global attractors are often sets with a highly complicated structure (for instance, they are often not locally connected) and it is in general a hard problem finding out directly their existence. 
The concept of {\em trapping region} helps proving their existence.
Unlike global attractors, trapping regions are not unique and often are sets with an elementary structure, such as closed balls.
%
%
%

    We say that $Q\subset X$ is a {\bf trapping region} for $\cF$ if $Q$ is closed and forward invariant under $\Phi_\cF$.
    Given a trapping region $Q$, we denote by {\BF $\cF_Q$} the restriction of $\cF$ to $Q$, namely $\cF_Q=\{f_1|_Q,\dots,f_m|_Q\}$.
    We say that a trapping region $Q$ is {\bf global} if $Q$ absorbs, under $\Phi_\cF$, every compact set $K\subset X$.
    We denote by {\BF $\cK_\cF$} the set of all compact global trapping regions of $\cF$.
%
    

\medskip
A fundamental role of trapping regions, as illustrated by the following result, is that one can replace the whole phase space $X$ by any global trapping region when studying the asymptotics of $\Phi_\cF$.
This means that {\bf it is the topological properties of compact global trapping regions, rather than those of the whole phase space $X$, that  play a role in the qualitative dynamics of an IFS with compact dynamics}.
\begin{proposition}
    \label{prop: G is natural under restrictions}
    Assume that $\cF$ has compact dynamics and let $Q\in\cK_\cF$.
    Then $\cF_Q$ has compact dynamics and $\cG_{\cF_Q}=\cG_\cF$.
\end{proposition}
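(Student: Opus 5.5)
The plan is to show that $\Gl$ itself is a global attractor for $\cF_Q$; uniqueness (Proposition~\ref{prop: G is unique}) then gives $\cG_{\cF_Q}=\Gl$. This requires three things: $\Gl\subset Q$, $\Gl$ is compact and $H$-invariant under $\cF_Q$, and $\Gl$ attracts under $\cF_Q$ every compact subset of $Q$.

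\textbf{Step 1: $\Gl\subset Q$.} Since $Q$ is a global trapping region, it absorbs the compact set $\Gl$. Hence there is $\tau>0$ with $\P(\Gl)\subset Q$ for every $I\in\cI^m_\tau$. In particular $H_\cF^n(\Gl)=\bigcup_{|I|=n}\P(\Gl)\subset Q$ for every $n\ge\tau$. Since $\Gl$ is $H$-invariant, $H_\cF^n(\Gl)=\Gl$, so $\Gl\subset Q$. I expect this to be the only genuinely non-formal step, and the argument just given, which mirrors the proof of Proposition~\ref{prop: G is unique}, should settle it.

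\textbf{Step 2: properties of $\Gl$ under $\cF_Q$.} Because $\Gl\subset Q$, the maps $f_i|_Q$ agree with $f_i$ on $\Gl$. Therefore $H_{\cF_Q}(\Gl)=H_\cF(\Gl)=\Gl$. Compactness of $\Gl$ is intrinsic, and $Q$ carries the restricted metric.

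\textbf{Step 3: attraction in $Q$.} Let $K\subset Q$ be compact. Then $K$ is compact in $X$, so $\Gl$ attracts $K$ under $\cF$: for every $\eps>0$ there is $\tau$ with $\P(K)\subset N_\eps(\Gl)$ for all $I\in\cI^m_\tau$. Since $Q$ is forward-invariant, $\Phi^I_{\cF_Q}(K)=\P(K)\subset Q$. Hence $\Phi^I_{\cF_Q}(K)\subset N_\eps(\Gl)\cap Q$, which is exactly the $\eps$-neighbourhood of $\Gl$ in $Q$ for the restricted metric. So $\Gl$ attracts $K$ under $\cF_Q$.

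\textbf{Step 4: conclusion.} We also need $Q$ to be a legitimate phase space. It is closed in a locally compact metrizable space, hence locally compact and metrizable. By Steps 2 and 3, $\cF_Q$ has compact dynamics with global attractor $\Gl$, and by Proposition~\ref{prop: G is unique} applied to $\cF_Q$ we get $\cG_{\cF_Q}=\Gl$.
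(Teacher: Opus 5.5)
Your proposal is correct and follows the paper's proof essentially step for step. You first get $\Gl\subset Q$ from absorption of $\Gl$ together with $H$-invariance (via $H_\cF^n(\Gl)=\Gl$, where the paper writes each point as $\Phi_\cF^{I_n}(x_n)$ — the same argument), then check that $\Gl$ is compact, $H$-invariant and attracting for $\cF_Q$, and conclude by uniqueness; your remarks on the restricted metric and the local compactness of $Q$ just make explicit what the paper leaves implicit.
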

%

\begin{proof}
We first prove that
$\Gl \subset Q.$
Indeed, since $\Gl$ is compact and $Q$ absorbs every compact subset of $X$, there exists $\tau\ge 0$
such that
$\Phi_\cF^I(\Gl)\subset Q$ for every  $I\in\cI^m_\tau$.
Since $\Gl$ is $H$-invariant, for every $x\in \Gl$ and every $n\ge 1$ there exist $x_n\in \Gl$
and $I_n\in I^m$ with $|I_n|=n$ such that
$
x=\Phi_\cF^{I_n}(x_n).
$
Taking $n\ge \tau$, we get $x=\Phi_\cF^{I_n}(x_n)\in Q$. 
Hence, $\Gl\subset Q$.

As a subset of \(Q\), the set \(\Gl\) is compact, $H$-invariant under \(\cF_Q\), and attracts every compact subset of \(Q\), because it attracts every compact subset of \(X\). Hence, \(\Gl\) is a global attractor for \(\cF_Q\). 
By Proposition~\ref{prop: G is unique}, \(\cF_Q\) has compact dynamics and ${\cal G}_{\cF_Q}=\Gl$.
\end{proof}

The next result shows also another fundamental side of compact global trapping regions: they grant the existence of a global attractor.
\begin{proposition}[Existence of a global attractor]
    \label{thm: Q => G}
    Assume that $\cF$ has a compact global trapping region $Q$. 
    Then $\cF$ has compact dynamics and $\Gl=\Om(Q).$
\end{proposition}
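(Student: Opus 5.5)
I will show directly that $\cG:=\Om(Q)$ is non-empty, compact, $H$-invariant and attracts every compact subset of $X$. Then $\cF$ has compact dynamics and, by Proposition~\ref{prop: G is unique}, $\Gl=\Om(Q)$.

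\textbf{Compactness and non-emptiness.} Since $Q$ is forward-invariant, $\P(Q)\subset Q$ for every $I$. Hence each set $\overline{\bigcup_{|I|\ge n}\P(Q)}$ is a closed subset of the compact set $Q$. These sets are non-empty (assuming $Q\neq\emptyset$, which holds as soon as $X\neq\emptyset$, since $Q$ absorbs points) and decreasing in $n$. By Proposition~\ref{prop: topological description of Om}, $\Om(Q)$ is their intersection, so it is a non-empty compact subset of $Q$.

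\textbf{$H$-invariance.} The inclusion $H_\cF(\Om(Q))\subset\Om(Q)$ follows from continuity. If $\Pn(a_n)\to y$, then $\Phi_\cF^{iI_n}(a_n)\to f_i(y)$, and $|iI_n|\to\infty$.

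For the reverse inclusion, take $y=\lim\Pn(a_n)$ with $|I_n|\ge1$ and write $I_n=i_nJ_n$. Passing to a subsequence, $i_n=i$ is constant. The points $\Phi_\cF^{J_n}(a_n)$ lie in the compact set $Q$, so a further subsequence converges to some $z$. Then $z\in\Om(Q)$ and $f_i(z)=y$. (This is the ``standard argument'' invoked in Proposition~\ref{prop: omega(K) subset G}, here made rigorous by the compactness of $Q$.)

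\textbf{Attraction.} This is the step I expect to be the main obstacle.

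First, $\Om(Q)$ attracts $Q$. Suppose not. Then there are $\eps>0$, indices $I_n$ with $|I_n|\to\infty$ and points $a_n\in Q$ with $d(\Pn(a_n),\Om(Q))\ge\eps$. The points $\Pn(a_n)$ lie in the compact set $Q$, so a subsequence converges to some $y$. By definition $y\in\Om(Q)$, which contradicts $d(y,\Om(Q))\ge\eps$.

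Now let $K$ be any compact set. Since $Q$ is global, it absorbs $K$: there is $\tau$ with $\Phi_\cF^J(K)\subset Q$ for all $|J|\ge\tau$. Given $\eps>0$, choose $\sigma$ such that $\Phi_\cF^L(Q)\subset N_\eps(\Om(Q))$ for all $|L|\ge\sigma$. Any $I$ with $|I|\ge\tau+\sigma$ factors as $I=LJ$ with $|J|=\tau$ and $|L|\ge\sigma$. Therefore
\[
\P(K)=\Phi_\cF^L\bigl(\Phi_\cF^J(K)\bigr)\subset\Phi_\cF^L(Q)\subset N_\eps(\Om(Q)).
\]

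Combining the three steps, $\Om(Q)$ is a global attractor, and by uniqueness $\Gl=\Om(Q)$.
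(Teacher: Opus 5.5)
Your proof is correct and follows essentially the same route as the paper. You get non-emptiness and compactness from the nested intersection inside $Q$, $H$-invariance by freezing the first index and extracting a convergent subsequence in $Q$, and attraction by first showing that $\Om(Q)$ attracts $Q$ and then composing with the absorption time. The only difference is cosmetic: you argue sequentially (proving attraction of $Q$ by contradiction), whereas the paper works with the compact sets $E_n=\bigcup_{|I|=n}\P(Q)$ and uses that a decreasing sequence of compacta eventually lies in any neighbourhood of its intersection.
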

\begin{proof}
Recall that
$\Om(Q)=\bigcap_{n\ge 0} E_n$ and 
$E_n=\bigcup_{|I|\ge n}\P(Q).$
Since $Q$ is forward-invariant, for every $n\ge 0$ we have
$
E_n=\bigcup_{|I|=n}\P(Q).
$
Hence, each $E_n$ is a compact subset of $Q$, and
$
E_{n+1}\subset E_n$ for all $n\ge 0$.
Therefore $\Om(Q)=\bigcap_{n\ge 0}E_n$ is a non-empty compact subset of $Q$.

By Proposition~\ref{prop: forward invariant}, $\Omega_\cF(Q)$ is forward-invariant.
We now show that $\Om(Q)$ is also $H$-invariant. 
By its forward-invariance, $H_\cF(\Om(Q))\subset \Om(Q).$
It remains to prove the opposite inclusion.
Fix $x\in \Om(Q)$. 
Since $x\in E_n$ for every $n\ge 1$, for each $n\ge 1$ there exist $q_n\in Q$ and a multiindex $I_n=i_nJ_n, |I_n|=n$, such that $x=\Phi_\cF^{I_n}(q_n)=f_{i_n}\bigl(\Phi_\cF^{J_n}(q_n)\bigr).$
Set $y_n=\Phi_\cF^{J_n}(q_n).$
Then $y_n\in E_{n-1}\subset Q$ and $x=f_{i_n}(y_n).$
Since $Q$ is compact and $\{1,\dots,m\}$ is finite, passing to a subsequence if necessary we may assume that $y_n\to y\in Q$ and $i_n=i$ for all $n$.
We claim that $y\in \Om(Q)$. 
Indeed, fix $p\ge 0$. 
If $n$ is sufficiently large, then $n-1\ge p$, and so $y_n\in E_{n-1}\subset E_p$.
Since $E_p$ is closed, passing to the limit yields $y\in E_p$. 
Since $p$ was arbitrary, we conclude that $y\in \bigcap_{p\ge 0}E_p=\Om(Q)$.
By continuity of $f_i$, $x=\lim_{n\to\infty} f_i(y_n)=f_i(y)$ and, since $y\in \Om(Q)$, this proves that $x\in H_\cF(\Om(Q))$. 
Hence, $\Om(Q)\subset H_\cF(\Om(Q))$.
Therefore $H_\cF(\Om(Q))=\Om(Q)$, namely $\Om(Q)$ is $H$-invariant.

Now we prove that $\Om(Q)$ attracts every compact subset of $X$. 
Let $K\subset X$ be compact and let $\varepsilon>0$. 
Since $Q$ is a global trapping region, there exists $N_0\ge 0$ such that $\P(K)\subset Q$ for every $I\in\cI^m_{N_0}$.
Since the family $(E_n)_{n\ge 0}$ is decreasing and $\Om(Q)=\bigcap_{n\ge 0}E_n,$ compactness implies that there exists $n\ge 0$ such that $d(E_n,\Om(Q))<\varepsilon.$
Now, let $I\in\cI^m$ with $|I|\ge N_0+n$ and write $I=JL$ with $|L|=N_0$ and $|J|\ge n$. 
Then
$\P(K)=\Phi_\cF^J(\Phi_\cF^L(K))\subset \Phi_\cF^J(Q)\subset E_n.
$
Hence
\[
d(\P(K),\Om(Q))\le d(E_n,\Om(Q))<\varepsilon
\]
for every $|I|\ge N_0+n$. This proves that $\Om(Q)$ attracts every compact subset of $X$.
Hence, $\cF$ has compact dynamics and $\Gl=\Om(Q)$.
\end{proof}
\begin{lemma}
    \label{lemma: small trapping regions}
    Assume that \(\cF\) has compact dynamics. 
    Then, for every \(\varepsilon>0\), there exists a compact global trapping region $Q\subset N_\varepsilon(\Gl)$.
\end{lemma}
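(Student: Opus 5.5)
The plan is the standard construction for semiflows: take a small compact neighbourhood of $\Gl$, let it absorb itself, and then forward-saturate it. Fix $\varepsilon>0$. Since $\Gl$ is compact and $X$ is locally compact, there is $0<\delta<\varepsilon$ such that $K=\overline{N_\delta(\Gl)}$ is compact; shrinking $\delta$ if needed, I will also assume $K\subset N_\varepsilon(\Gl)$ with some room to spare. The natural candidate is
\[
Q=\bigcup_{I\in\cI^m}\P(K)=\bigcup_{|I|<\tau}\P(K)\;\cup\;\bigcup_{|I|\ge\tau}\P(K).
\]
This set is forward-invariant by construction, since $\Phi^J_\cF\circ\P=\Phi^{JI}_\cF$. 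The task is then to choose $\delta$ so that $Q$ is compact and lies inside $N_\varepsilon(\Gl)$.

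First, because $\Gl$ attracts $K$, there is $\tau$ with $\P(K)\subset N_\delta(\Gl)\subset K$ for all $|I|\ge\tau$. Consequently every $\P(K)$ with $|I|\ge\tau$ lies in $K$, and $Q=\bigcup_{|I|<\tau}\P(K)\cup K'$ for some $K'\subset K$. In fact $Q=\bigcup_{|I|\le\tau}\P(K)$: any longer word $I=JL$ with $|L|=\tau$ gives $\P(K)\subset\Phi^J_\cF(K)$, and we can reduce the length of $J$ inductively modulo $\tau$. This exhibits $Q$ as a finite union of continuous images of a compact set, so $Q$ is compact and in particular closed. Since $Q\supset K\supset N_\delta(\Gl)$, and $\Gl$ attracts every compact set, every compact set eventually enters $N_\delta(\Gl)\subset Q$. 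Hence $Q$ is global.

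The main obstacle is the inclusion $Q\subset N_\varepsilon(\Gl)$, because the finitely many words with $|I|<\tau$ may push points of $K$ further out. Here $\tau$ depends on $\delta$, so shrinking $\delta$ naively does not control this. To fix it, I first choose $\eta$ with $\overline{N_\eta(\Gl)}$ compact and contained in $N_\varepsilon(\Gl)$. Applying attraction to the compact set $C=\overline{N_\eta(\Gl)}$ gives $\tau$ with $\P(C)\subset N_\eta(\Gl)$ for $|I|\ge\tau$. Next, using uniform continuity of the finitely many maps $\P$, $|I|<\tau$, on the compact $C$, together with forward-invariance of $\Gl$ under each $\P$ (indeed $H$-invariance gives $f_i(\Gl)\subset\Gl$), I pick $\delta<\eta$ with $\P(N_\delta(\Gl))\subset N_\eta(\Gl)$ for all $|I|<\tau$. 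Finally I set
\[
Q=\bigcup_{I}\P\bigl(\overline{N_\delta(\Gl)}\bigr).
\]
For $|I|<\tau$ its pieces lie in $\overline{N_\eta(\Gl)}$ by the choice of $\delta$, and for $|I|\ge\tau$ they lie in $N_\eta(\Gl)$ since $\overline{N_\delta(\Gl)}\subset C$. Thus $Q\subset\overline{N_\eta(\Gl)}\subset N_\varepsilon(\Gl)$. Closedness then follows from $Q$ being a closed-in-compact union, as above, or simply by replacing $Q$ with its closure. The closure stays forward-invariant by continuity and remains inside $\overline{N_\eta(\Gl)}$.
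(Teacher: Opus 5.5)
Your proof is correct, but it builds a different trapping region from the paper's.

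The paper fixes a compact neighbourhood $K$ with $\Gl\subset\operatorname{int}K\subset K\subset N_\varepsilon(\Gl)$ and picks $N$ with $\P(K)\subset\operatorname{int}K$ for $|I|\ge N$. It then keeps only the tail, $Q=\overline{\bigcup_{|I|\ge N}\P(K)}$. Discarding the short words makes $Q\subset K\subset N_\varepsilon(\Gl)$ automatic. The price is a two-stage absorption argument: a compact set first enters $K$ after $M$ steps, and $N$ further steps put it in $Q$.

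You instead take the full forward saturation of a small closed neighbourhood $\overline{N_\delta(\Gl)}$. Forward-invariance is then immediate, and so is absorption, since $Q\supset N_\delta(\Gl)$. The cost is that you must control the finitely many short words $|I|<\tau$. You do this correctly with the nested choice $\delta<\eta$, uniform continuity of those finitely many maps on $C$, and $f_i(\Gl)\subset\Gl$. Your construction also gives a little more: $Q$ is a neighbourhood of $\Gl$. The paper's tail-closure need not be; for the single constant map $f\equiv 0$ on $\mathbb{R}$ it is just $\{0\}$.

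One small point: the finite-union compactness argument ``as above'' does not carry over verbatim to your final construction. There, $\tau$ only sends $C$ into $N_\eta(\Gl)$, not $\overline{N_\delta(\Gl)}$ into itself, so you would need a fresh $\tau'$ from the attraction of $\overline{N_\delta(\Gl)}$ into $N_\delta(\Gl)$. The alternative you also give, passing to the closure, works exactly as stated.
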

\begin{proof}
    Choose a compact neighbourhood \(K\) of \(\Gl\) such that $\Gl\subset \operatorname{int}K\subset K\subset N_\varepsilon(\Gl)$.
    Since \(\Gl\) attracts \(K\), there exists \(N\ge1\) such that
    $\P(K)\subset \operatorname{int}K$ for $|I|\ge N$.
    Set $Q=\overline{\cup_{|I|\ge N}\P(K)}$.
    Then \(Q\subset K\), hence \(Q\) is compact and \(Q\subset N_\varepsilon(\Gl)\).

    Now, for each generator \(f_i\),
\[
f_i\!\left(\bigcup_{|I|\ge N}\Phi_F^I(K)\right)
\subset
\bigcup_{|J|\ge N+1}\Phi_F^J(K)
\subset
\bigcup_{|J|\ge N}\Phi_F^J(K).
\]
    By continuity, $f_i(Q)\subset Q$.
    Thus, \(Q\) is a trapping region.
    Finally, let \(C\subset X\) be compact. 
    Since \(\Gl\) attracts \(C\) and \(K\) is a neighbourhood of \(\Gl\), there exists \(M\ge0\) such that $\P(C)\subset K$ for $|I|\ge M$.
    If \(|L|\ge M+N\), split \(L\) into two pieces, first of length at least \(M\) and then of length at least \(N\). 
    The first part sends \(C\) into \(K\), and the remaining part sends it into \(Q\). 
    Hence, $\Phi_\cF^L(C)\subset Q$ for $|L|\ge M+N$.
    Therefore, \(Q\) absorbs every compact subset of \(X\), so it is a compact global trapping region.
\end{proof}
%



%
\begin{theorem}
    \label{thm: compact dynamics = compact gl tr reg}
    $\cF$ has compact dynamics if and only if it has a compact global trapping region $Q$.
\end{theorem}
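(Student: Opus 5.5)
The theorem is essentially a repackaging of the two preceding results, so the plan is to prove each implication by invoking one of them.

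For the ``if'' direction, assume $\cF$ has a compact global trapping region $Q$. Proposition~\ref{thm: Q => G} then applies directly. It shows that $\Om(Q)$ is non-empty, compact and $H$-invariant, and that it attracts every compact subset of $X$. Hence $\Om(Q)$ is a global attractor, and $\cF$ has compact dynamics with $\Gl=\Om(Q)$. No further work is needed here beyond checking that the hypotheses of Proposition~\ref{thm: Q => G} are exactly ``$Q$ compact, closed, forward-invariant, absorbing every compact set''. This matches the definition of $\cK_\cF$.

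For the ``only if'' direction, assume $\cF$ has compact dynamics. I would fix any $\varepsilon>0$, say $\varepsilon=1$, and apply Lemma~\ref{lemma: small trapping regions}. It produces a compact global trapping region $Q\subset N_\varepsilon(\Gl)$, so in particular $\cK_\cF\neq\varnothing$.

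The only point needing care is inside that lemma: the existence of a compact neighbourhood $K$ of $\Gl$ with $K\subset N_\varepsilon(\Gl)$. This uses local compactness of $X$ together with compactness of $\Gl$. Cover $\Gl$ by finitely many open sets with compact closures of diameter less than $\varepsilon$ centered near $\Gl$, and take the union of the closures. I expect this to be the only mildly nontrivial step, and it is already handled in the lemma, so the theorem follows by combining the two results.
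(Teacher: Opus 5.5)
Your proposal is correct and matches the paper's proof exactly: the ``if'' direction is Proposition~\ref{thm: Q => G}, and the ``only if'' direction is Lemma~\ref{lemma: small trapping regions} applied with any fixed $\varepsilon>0$. Your sketch of the compact neighbourhood $K\subset N_\varepsilon(\Gl)$ also works, provided each of the finitely many relatively compact open sets of diameter $<\varepsilon$ contains a point of $\Gl$ (for instance, is centered at one).
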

\begin{proof}
If $\cF$ has a compact global trapping region, then $\cF$ has compact dynamics by Proposition~\ref{thm: Q => G}.
Conversely, if $\cF$ has compact dynamics, then, by Lemma~\ref{lemma: small trapping regions}, for every
$\varepsilon>0$ there exists a compact global trapping region
$Q\subset N_\varepsilon(\Gl)$. Hence $\cF$ has a compact global trapping
region.
\end{proof}

\section{The chain graph}
\label{sec: chains}
The concept of $\eps$-chain was introduced in the theory of dynamical systems by Charles Conley in 1972~\cite{Con72b} and imported to the theory of IFSs by Barnsley and Vince~\cite{BV13} as follows.

\medskip\noindent
{\BF Chains for IFSs.}
Let $\varepsilon>0$.
An {\BF$\varepsilon$-chain} from $x$ to $y$ is a finite sequence of points of $X$ $x=x_0,x_1,\dots,x_k=y$
together with a sequence of indices $i_0,\dots,i_{k-1}$ such that
$
d(f_{i_n}(x_n),x_{n+1})<\varepsilon$ for $n=0,\dots,k-1$.



\medskip\noindent
{\bf Being downstream.}
Given $x,y\in X$, we say that $y$ is {\bf downstream} from $x$ if, for every $\eps>0$, there is an $\eps$-chain from $x$ to $y$.
In this case, we write $x\Cto y$ and we set 
$$
\text{\BF$\cC$}=\{(x,y):x\Cto y\}\subset X\times X
$$
for the corresponding binary relation.
\begin{proposition}
The relation $\cC$ is closed and transitive and contains all $(x,y)$ with $y\in\cO_\cF(x)$.
\end{proposition}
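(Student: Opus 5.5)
The plan is to prove the three claims separately, handling the orbit claim first, then transitivity, then closedness. The only delicate point is how to treat $x=y$, i.e.\ whether chains of length $k=0$ are allowed. I will read the definition as allowing $k=0$, so the identity multi-index $0$ is covered and $(x,x)\in\cC$ for every $x$.

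\emph{Orbit points.} Let $y=\Phi_\cF^I(x)$ with $I=i_1\dots i_k$. Set $x_0=x$ and $x_{n+1}=f_{i_{k-n}}(x_n)$, applying the maps in the order $f_{i_k},f_{i_{k-1}},\dots,f_{i_1}$. This gives an exact chain with $d(f_{i_{k-n}}(x_n),x_{n+1})=0<\eps$ for every $\eps>0$, and $x_k=y$. If $I=0$, the trivial chain from the convention above handles $y=x$.

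\emph{Transitivity.} Suppose $x\Cto y$ and $y\Cto z$, and fix $\eps>0$. Take an $\eps$-chain from $x$ to $y$ and an $\eps$-chain from $y$ to $z$. Their concatenation, with index sequences concatenated, is an $\eps$-chain from $x$ to $z$, because the last point of the first chain is the first point of the second.

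\emph{Closedness.} This is the main step. Let $(x_j,y_j)\to(x,y)$ with $x_j\Cto y_j$, and fix $\eps>0$. I modify a chain from $x_j$ to $y_j$ at both ends.

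At the end, we keep the intermediate points and replace the last point $y_j$ by $y$. The last inequality becomes $d(f_{i_{k-1}}(x_{k-1}),y)\le d(f_{i_{k-1}}(x_{k-1}),y_j)+d(y_j,y)$. Taking the chain to be an $\eps/2$-chain and $j$ large enough that $d(y_j,y)<\eps/2$ makes this term less than $\eps$.

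At the start, we replace $x_j$ by $x$ and need the first step from $x$ to stay within $\eps$. The first step uses $f_{i_0}$. Since there are finitely many maps and each is continuous at $x$, there is $\delta>0$ with $d(f_i(x),f_i(x_j))<\eps/2$ for all $i$ whenever $d(x,x_j)<\delta$. Then $d(f_{i_0}(x),x_1)<\eps/2+\eps/2$.

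Chains of length one, where the first and last steps coincide, need both estimates at once. Combining them gives a bound of $\eps/2+\eps/2+\eps/2$. Running the whole argument with $\eps/3$ in place of $\eps/2$ fixes the constants.

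If $k=0$ is not allowed, then $x_j=y_j$ only occurs through a genuine chain, and the same argument applies.

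Hence $(x,y)\in\cC$, so $\cC$ is closed in $X\times X$. The key ingredients are finiteness of $\cF$, which gives a uniform $\delta$ at $x$, and the fact that only the first and last links of the chain need to be altered.
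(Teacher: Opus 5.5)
Your argument follows the paper's route: exact chains for orbit points, concatenation for transitivity, and, for closedness, an $\eps/3$-chain from $x_j$ to $y_j$ in which only the first and last points are replaced, using finiteness of $\cF$ and continuity at $x$. Your handling of one-step chains, where both errors land on the same link, is actually more careful than the paper's ``$<2\eps/3$'' bound.

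The one case you leave open is the trivial chain. You rightly allow $k=0$; this is needed anyway so that $(x,x)\in\cC$ for $I=0$. But then the chain from $x_j$ to $y_j$ may be the single point $x_j=y_j$, and there is no first or last link to modify. Your closing remark addresses the other convention, where this cannot happen, so under your own convention the case is unhandled.

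The paper closes it explicitly. If $k=0$ for infinitely many $j$, then $x_j=y_j$ along a subsequence, so $x=y$ and the trivial chain works. Otherwise $k\ge 1$ for all large $j$, and your endpoint replacement applies. Equivalently, if $x\neq y$ then $x_j\neq y_j$ for large $j$, so every chain from $x_j$ to $y_j$ has at least one link.
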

\begin{proof}
We prove here only that $\cC$ is closed and leave to the reader the simple proof of the other two points.
Let
\((x_n,y_n)\in \cC\) and \((x_n,y_n)\to(x,y)\) and fix an
\(\varepsilon>0\). 
Since the family \(\cF\) is finite and every \(f_i\) is continuous, for all sufficiently large \(n\) we have that $d(f_i(x),f_i(x_n))<\varepsilon/3$ for $i=1,\ldots,m$,
and also \(d(y_n,y)<\varepsilon/3\). 
Choose such an \(n\). 
Since \((x_n,y_n)\in \cC\), there is an \(\varepsilon/3\)-chain
$x_n=z_0,z_1,\ldots,z_k=y_n$
from \(x_n\) to \(y_n\). 
If \(k=0\) for infinitely many $n$, then \(x_n=y_n\), so that \(x=y\), and the trivial chain proves \(x\Cto y\). 
Otherwise, for $n$ large enough we have \(k\ge1\) and, replacing the initial point \(x_n\) by \(x\) and the endpoint \(y_n\) by \(y\) gives
$x,z_1,\ldots,z_{k-1},y.$
The first jump has size \(<2\varepsilon/3\), the last jump has size \(<2\varepsilon/3\), and all intermediate jumps still have size \(<\varepsilon/3\). 
Thus, this is an \(\varepsilon\)-chain from \(x\) to \(y\). 
Since \(\varepsilon>0\) was arbitrary, \(x\Cto y\).
Therefore, \(\cC\) is closed.
\end{proof}

\medskip\noindent
{\bf Chain-recurrence, chain nodes, chain edges and chain graph.}
Two points $x,y\in X$ are {\BF chain-equivalent} if $x\Cto y$ and $y\Cto x$.
In this case, we write $x\Ceq y$.
The set of {\BF chain-recurrent} points is then defined as
$$
\text{\BF$\cR_\cC$} = \{x\in X\;:\;x\Ceq z\text{ for all }z\in\cO_\cF(x)\}.
$$
%
Chain-equivalence, restricted to the chain-recurrent set, is an equivalence relation.
We call {\bf chain node} (or, simply, a {\bf node}) each of the equivalence classes of this equivalence relation.
Equivalently, each chain node $N$ is a maximal subset of $\cR_\cC$ such that each of the points of $N$ is chain-equivalent to every other point of $N$.

Given two distinct chain nodes $M,N$, we say that there is a {\bf chain edge} (or, simply, an {\bf edge}) from $M$ to $N$ if there is an $x_0\in M$ and a $y_0\in N$ such that $x_0\Cto y_0$.
By transitivity, this means that $x\Cto y$ for every $x\in M$ and $y\in N$.
In this case, we write $M\Cto N$.

We call {\bf chain graph} the directed graph $\Gamma_\cC$ having the chain nodes as its nodes and the chain edges as its edges.
Notice that, since $\cC$ is transitive, $\Gamma_\cC$ has no directed cycles and that $\Gamma_\cC$ can have continuously many nodes.

\medskip
Notice that this definition of chain-recurrence is stronger than the one given by Barnsley and Vince in~\cite{BV13}.
A point is ``BV chain-recurrent'' if, for every $\eps>0$, there is an $\eps$-chain for $\cF$ from $x$ to itself.
As the example below shows, the BV chain nodes lack forward-invariance and so are not suitable to represent elementary blocks with some kind of recurrent dynamics, which is our main goal. 

\medskip
Let $X=[0,1]\cup[2,3]\subset\mathbb R$ with the Euclidean metric, and consider the IFS 
$\cF=\{f,g\}$, $f=\operatorname{id}_X$, $g(x)=2$.
Every point of \(X\) is BV chain-recurrent: indeed, for every \(x\in X\) and every \(\varepsilon>0\), the one-step chain $x,\ x$ using the map \(f\) is an exact \(\varepsilon\)-chain from \(x\) to itself.

If \(x\in[0,1]\), then $2=g(x)\in \cO_\cF(x)$.
However, for \(\varepsilon<1\), there is no \(\varepsilon\)-chain from \(2\) back to \(x\). Indeed, starting in the component \([2,3]\), both generators keep the next point within distance \(<\varepsilon\) of \([2,3]\), and such a chain can never jump across the gap to
\([0,1]\).
Hence, points in \([0,1]\) return to themselves only because one is
allowed to ignore the generator \(g\), which sends them irreversibly to the other component. 

Ultimately, there are two BV nodes: the interval $[0,1]$ and the interval $[2,3]$. 
Moreover, there is an edge from the first to the latter, so that the BV graph is $[0,1]\to[2,3]$.
On the other side, $g([0,1])\subset[2,3]$, namely the node $[0,1]$ is not forward-invariant (and so not $H$-invariant either).
Points in $[0,1]$ should therefore not be regarded as recurrent
for the full IFS dynamics.
On the contrary, with our definition only points in $[2,3]$ are chain-recurrent and $[2,3]$ is a node, so the graph consists of a single node.
Since $f([2,3])=[2,3]$ and $g([2,3])=\{2\}$, this node is forward-invariant (in fact, it is moreover $H$-invariant).

\medskip
Finally, notice that both our chain-recurrent set and the BV chain recurrent set coincide with the standard chain-recurrent set for $m=1$.

\medskip\noindent
{\bf Chain-recurrence and recurrence.} Next proposition shows that, just as in the $m=1$ case, every recurrent point is chain-recurrent.
\begin{proposition}
    \label{prop: at least one node}
    For any IFS $\cF$, $\cR_\cF\subset\cR_\cC$.
    In particular, $\cR_\cC\neq\varnothing$ if $\cF$ has compact dynamics.
\end{proposition}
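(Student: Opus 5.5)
The plan is to unpack the definitions directly. Fix $x\in\cR_\cF$ and $z\in\cO_\cF(x)$; we must show $x\Ceq z$, that is, $x\Cto z$ and $z\Cto x$.

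The first relation is immediate. The proposition stating that $\cC$ contains all pairs $(x,y)$ with $y\in\cO_\cF(x)$ gives $x\Cto z$ at once; concretely, if $z=\P(x)$ with $I=i_1\dots i_k$, the exact orbit points form an $\eps$-chain for every $\eps>0$.

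For the reverse relation $z\Cto x$ we use recurrence. Since $x\in\cR_\cF$, we have $x\in\Om(z)$, so there are $J_n\in\cI^m$ with $|J_n|\to\infty$ and $\Phi_\cF^{J_n}(z)\to x$. Then $(z,\Phi_\cF^{J_n}(z))\in\cC$ for every $n$, again because $\Phi_\cF^{J_n}(z)\in\cO_\cF(z)$. These pairs converge to $(z,x)$, and $\cC$ is closed, so $(z,x)\in\cC$.

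An alternative to closedness is an explicit chain. Take the exact orbit chain from $z$ to $\Phi_\cF^{J_n}(z)$ and replace its last point by $x$, with $n$ chosen so that $d(\Phi_\cF^{J_n}(z),x)<\eps$. This works only if the chain has at least one step, which holds once $|J_n|\ge1$, and that is guaranteed since $|J_n|\to\infty$. The only mildly delicate point in the whole argument is this: the last jump must come from an actual map application, which is why length at least one matters, and the closedness argument handles it automatically.

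Together the two relations give $x\Ceq z$ for every $z\in\cO_\cF(x)$, so $x\in\cR_\cC$. For the second assertion, if $\cF$ has compact dynamics then Lemma~\ref{lemma: Om} gives $\cR_\cF\neq\varnothing$, and hence $\cR_\cC\supset\cR_\cF$ is non-empty. I expect no real obstacle in any of these steps.
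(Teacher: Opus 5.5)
Your proof is correct and follows essentially the paper's argument. You get $x\Cto z$ from the exact orbit segment, $z\Cto x$ from $x\in\Om(z)$ via an orbit segment ending in a final $\eps$-jump to $x$ (your closedness variant is an equivalent repackaging), and non-emptiness from Lemma~\ref{lemma: Om}; your remark that the segment needs length at least one is a detail the paper leaves implicit.
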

%
\begin{proof}
    Let \(x\in \cR_\cF\) and \(z\in \cO_\cF(x)\). Since \(z\in \cO_\cF(x)\), the exact orbit segment from \(x\) to \(z\) gives $x\succeq_{C_F} z$.
    On the other hand, since \(x\in \cR_\cF\), we have $x\in\Omega_F(z)$.
    Therefore, for every \(\varepsilon>0\), there is an orbit point $\P(z)\) with $d(\P(z),x)<\varepsilon$.
    Following the exact orbit segment from \(z\) to \(\P(z)\), and then making the final \(\varepsilon\)-jump to \(x\), gives an \(\varepsilon\)-chain from \(z\) to \(x\). 
    Hence $z\Cto x$, so that \(x\Ceq z\) for every \(z\in \cO_\cF(x)\) and $x\in \cR_{\cC}$.
    If $\cF$ has compact dynamics, $\cR_\cF$ is non-empty by Lemma~\ref{lemma: Om} and so $\cR_\cC$ is non-empty as well. 
\end{proof}
Unlike $\cR_\cF$, though, $\cR_\cC$ is always closed: 
\begin{proposition}
    For any IFS $\cF$, $\cR_\cC$ and each of its nodes are closed.
\end{proposition}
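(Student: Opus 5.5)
The plan is to show that $\cR_\cC$ is closed directly from the closedness of the relation $\cC$, and then to show that each node is closed by writing it as an intersection of $\cR_\cC$ with closed sections of $\cC$.

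\textbf{Closedness of $\cR_\cC$.} Let $x_n\in\cR_\cC$ with $x_n\to x$, and fix $z\in\cO_\cF(x)$, say $z=\P(x)$. Put $z_n=\P(x_n)$. Then $z_n\in\cO_\cF(x_n)$, and $z_n\to z$ by continuity of $\P$, a finite composition of continuous maps. Since $x_n\in\cR_\cC$, we have $x_n\Cto z_n$ and $z_n\Cto x_n$, so both $(x_n,z_n)$ and $(z_n,x_n)$ lie in $\cC$. Because $\cC$ is closed (by the preceding proposition), passing to the limit gives $(x,z)\in\cC$ and $(z,x)\in\cC$, that is, $x\Ceq z$. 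Since $z\in\cO_\cF(x)$ was arbitrary, $x\in\cR_\cC$.

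\textbf{Closedness of each node.} Let $N$ be a node and fix $p\in N$. I claim that
\[
N=\cR_\cC\cap\{y: p\Cto y\}\cap\{y: y\Cto p\}.
\]
The inclusion $\subset$ is the definition of a node. For $\supset$, take $y\in\cR_\cC$ with $y\Ceq p$. By transitivity $y$ is chain-equivalent to every point of $N$, and maximality of the equivalence class then puts $y$ in $N$. Each of the three sets on the right is closed:
\begin{itemize}
\item $\cR_\cC$ by the first part;
\item $\{y:p\Cto y\}$ as the preimage of the closed set $\cC$ under the continuous map $y\mapsto(p,y)$;
\item $\{y:y\Cto p\}$ likewise, using $y\mapsto(y,p)$.
\end{itemize}
So $N$ is an intersection of closed sets and hence closed.

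The only step that needs care is the limiting argument in the first part: we must approximate the orbit point $z$ of $x$ by orbit points $z_n$ of the $x_n$ built with the \emph{same} multi-index $I$, so that continuity of the single map $\P$ applies. With that choice fixed, everything else follows from closedness and transitivity of $\cC$.
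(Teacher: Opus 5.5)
Your proof is correct and matches the paper's argument: you close $\cR_\cC$ by applying the same multi-index $I$ along the sequence $x_n$ and using closedness of $\cC$. You then write each node as $\cR_\cC\cap\{y:p\Cto y\}\cap\{y:y\Cto p\}$, and you also justify that set identity via transitivity and maximality, which the paper only asserts.
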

\begin{proof}
    First, we show that \(\cR_\cC\) is closed. 
    Let \(x_n\in \cR_\cC\) and \(x_n\to x\).
    If \(z\in \cO_\cF(x)\), write \(z=\Phi_F^I(x)\). 
    Set $z_n=\P(x_n)$.
    Then \(z_n\to z\). 
    Since \(x_n\in \cR_\cC\), we have $x_n\Ceq z_n$.
    Given that \(\cC\) is closed, passing to the limit gives $x\Ceq z$.
    Since \(z\in \cO_\cF(x)\) was arbitrary, \(x\in \cR_\cC\), so
    \(\cR_\cC\) is closed.
    Now, let \(N\) be the node containing \(x_0\in \cR_\cC\). 
    Then $N=\cR_\cC\cap\{y:x_0\Cto y\}\cap\{y:y\Cto x_0\}$.
    Each set on the right is closed, so \(N\) is closed.
%
\end{proof}

\medskip\noindent
{\bf Chain-recurrence and limit sets.}
When $m=1$ and $\cF$ has compact dynamics, the limit set of each point is a (non-empty) chain-equivalent set, namely it is entirely contained in a single chain node.
For $m>1$, $\Om(x)$ is not necessarily a subset of $\cR_{\cC}$ and $\Om(x)\cap\cR_{\cC}$ is not necessarily chain-equivalent.
The following is our most general result in this regard.

\begin{proposition}
    \label{prop: Om(x) has some rec pt}
    Assume that $\cF$ has compact dynamics.
    Then, for each $x\in X$, $\Om(x)$ has at least one non-empty minimal compact forward-invariant subset and each such minimal set is chain-equivalent, namely it is contained in some chain node.
\end{proposition}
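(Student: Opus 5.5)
Existence comes directly from the argument in the proof of Lemma~\ref{lemma: Om}. Since $\cF$ has compact dynamics, $\Om(x)$ is a non-empty compact set contained in a compact neighbourhood of $\Gl$. It is forward-invariant by Proposition~\ref{prop: forward invariant}. We then consider the family of non-empty compact forward-invariant subsets of $\Om(x)$, ordered by reverse inclusion. Any chain in this family has a non-empty intersection by compactness (finite intersection property), and this intersection is again compact and forward-invariant. Zorn's lemma therefore gives a minimal element $M$.

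For the second claim, let $M$ be any non-empty minimal compact forward-invariant subset of $\Om(x)$. The argument of Lemma~\ref{lemma: Om} shows that every $p\in M$ is recurrent. Indeed, for $z\in\cO_\cF(p)$ we have $z\in M$. Since $M$ is compact and forward-invariant, Proposition~\ref{prop: topological description of Om} gives $\Om(z)\subset M$, and $\Om(z)$ is a non-empty compact forward-invariant set by Lemma~\ref{lemma: Om} and Proposition~\ref{prop: forward invariant}. Minimality then forces $\Om(z)=M$, so $p\in\Om(z)$. Hence $M\subset\cR_\cF\subset\cR_\cC$ by Proposition~\ref{prop: at least one node}.

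It remains to show that any two points $p,q\in M$ satisfy $p\Ceq q$. Since $q\in M=\Om(p)$ (minimality applied with $z=p$), there are multi-indices $I_n$ with $|I_n|\to\infty$ and $\Phi_\cF^{I_n}(p)\to q$. Fix $\eps>0$ and choose $n$ with $d(\Phi_\cF^{I_n}(p),q)<\eps$. The exact orbit segment from $p$ to $\Phi_\cF^{I_n}(p)$, with its last step redirected to land on $q$, is an $\eps$-chain from $p$ to $q$; this is the same construction as in the proof of Proposition~\ref{prop: at least one node}. The one care point is that the final jump must be absorbed into the last step of the chain: we replace the endpoint $\Phi_\cF^{I_n}(p)=f_{i}(w)$ by $q$, so that $d(f_i(w),q)<\eps$. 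This works because $|I_n|\ge 1$. Thus $p\Cto q$, and by symmetry $q\Cto p$.

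Therefore all points of $M$ are chain-recurrent and mutually chain-equivalent. Since nodes are the equivalence classes of $\Ceq$ on $\cR_\cC$, $M$ lies in a single chain node. The only step needing real justification is that $\Om(z)\subset M$ for $z\in M$, which follows from $M$ being closed and forward-invariant. The rest is routine.
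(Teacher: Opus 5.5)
Your proof is correct and follows the paper's argument. Existence comes from the Zorn's lemma construction in Lemma~\ref{lemma: Om}, minimality forces $\Om(y)=M$ for every $y\in M$, and this gives mutual downstream relations between any two points of $M$. You also make explicit that $M\subset\cR_\cF\subset\cR_\cC$, which is needed for $M$ to lie inside a chain node and which the paper's proof leaves implicit.
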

\begin{proof}
    The existence of a non-empty compact forward-invariant $M\subset\Om(x)$ is shown in the proof of Lemma~\ref{lemma: Om}.
    Because of the forward invariance, for any $y\in M$ we have that $\Om(y)\subset M$.
    Since $\Om(y)$ is forward-invariant and $M$ is minimal, $\Om(y)=M$.
    Hence, for any distinct points $y,z\in M$, we have that $y\in\Om(z)$ and $z\in\Om(y)$.
    Hence, $y\Cto z$ and $z\Cto y$, namely $y\Ceq z.$
\end{proof}

\medskip\noindent
{\bf Chain nodes and the global attractor.}
We show below that all chain nodes of an IFS $\cF$ with compact dynamics lie inside its global attractor $\Gl$.
Hence, the qualitative dynamics of $\cF$ can be entirely read from the restriction of $\cF$ to $\Gl$, which is a compact set. 
In other words, an IFS with compact dynamics has the same qualitative dynamics of an IFS on a compact set.

\begin{lemma}[Localization of chains near the attractor]
\label{lemma: localization}
Let \(\cF\) have compact dynamics.
Then, for every $\eps>0$, there exists \(\delta>0\) such that every \(\delta\)-chain starting in \(\Gl\) lies in \(N_\eps(\Gl)\).
\end{lemma}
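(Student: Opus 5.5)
The plan is to trap chains inside a compact forward-invariant neighbourhood of $\Gl$ that the maps push strictly inward. Fix $\eps>0$. By Lemma~\ref{lemma: small trapping regions} there is a compact global trapping region $Q\subset N_\eps(\Gl)$. Forward invariance of $Q$ alone is not enough: a $\delta$-chain may leave $Q$ through small jumps, and nothing then brings it back. So I would pass to a strictly inward-mapping set. Fix a compact neighbourhood $K$ of $\Gl$ with $\Gl\subset\operatorname{int}K\subset K\subset N_\eps(\Gl)$. Since $\Gl$ attracts $K$, there is $N$ with $\P(K)\subset\operatorname{int}K$ for $|I|\ge N$. As in the proof of Lemma~\ref{lemma: small trapping regions}, the set
\[
Q_j=\overline{\textstyle\bigcup_{|I|\ge j}\P(K)}
\]
is, for each $j\ge N$, a compact forward-invariant subset of $K$ containing $\Gl$. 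The inclusion $\Gl\subset Q_j$ holds because $\Gl$ is $H$-invariant and contained in $K$, so every point of $\Gl$ is $\P(x)$ for some $x\in\Gl$ with $|I|=j$.

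The main obstacle is that one step of the maps need not move $Q_N$ uniformly into its own interior, so I would argue with $N$ steps at a time. Define a nested family of sets by $A_0=Q_N$ and $A_{r+1}=\overline{N_{\delta}(H_\cF(A_r))}\cup A_r$ for $r<N$. I would aim to show that, for $\delta$ small, every $A_r$ stays inside $K$ and
\[
\bigcup_{|I|=N}\P(A_{N})\subset A_0 .
\]
Each $A_r$ is a compact set in a locally compact space. By uniform continuity of the finitely many compositions $\P$, $|I|\le 2N$, on a compact neighbourhood of $K$, as $\delta\to0$ the set $A_r$ converges in Hausdorff sense to $\bigcup_{|I|\le r}\P(Q_N)=Q_N$, using forward invariance of $Q_N$. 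This gives both claims for small $\delta$. The second uses $\P(K)\subset\operatorname{int}K$ for $|I|=N$, together with compactness: $\P(K)$ lies at positive distance from $X\setminus \operatorname{int}K$.

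The cleaner route is in fact to work with $K$ directly. The finitely many maps $\P$ with $|I|=N$ send the compact set $K$ into $\operatorname{int}K$, with positive margin $\eta$. By uniform continuity on a compact neighbourhood $K'$ of $K$, choose $\delta$ so small that any $\delta$-chain of length at most $N$ starting in $K$ stays in $K'$, in fact within $\eps$ of $\Gl$. Such a chain also ends within $\eta$ of the exact image $\P(x_0)$, hence in $K$. This is the standard shadowing of finite orbit segments by chains, obtained by induction on the number of steps using uniform continuity. Now split a $\delta$-chain starting in $\Gl\subset K$ into consecutive blocks of length $N$. Each block starts in $K$, so it ends in $K$ and never leaves $N_\eps(\Gl)$; the final partial block of length less than $N$ is controlled the same way. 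By induction on the blocks, the whole chain lies in $N_\eps(\Gl)$.

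The key choices are therefore: $K'$ compact with $K\subset\operatorname{int}K'\subset K'\subset N_\eps(\Gl)$, possible after shrinking $K$; the margin $\eta$ from compactness; and $\delta$ from uniform continuity of $f_1,\dots,f_m$ on $K'$, iterated $N$ times. The step I expect to require the most care is the uniform-continuity shadowing estimate, in particular ensuring that intermediate points of a block stay in $K'$, where uniform continuity applies. Doing the induction one step at a time with nested tolerances $\delta=\delta_N<\dots<\delta_1$ handles this.
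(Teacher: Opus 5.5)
Your final block argument has a genuine gap. It rests on the claim that, for small $\delta$, every $\delta$-chain of length at most $N$ starting \emph{anywhere} in $K$ stays in $K'\subset N_\eps(\Gl)$, and this is false in general.

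Uniform continuity only keeps such a chain close to the exact orbit segment of its initial point. Your choice of $N$ controls $\Phi_\cF^I(K)$ only for $|I|\ge N$; for $1\le |I|<N$ these sets may leave $N_\eps(\Gl)$. Since exact orbit segments are $\delta$-chains for every $\delta$, no choice of $\delta$ repairs this. For instance, on $X=\mathbb R^2$ take $\cF=\{f\}$ with $f(u,v)=(u/2+10v,\,v/2)$. Then $\Gl=\{0\}$, but for $\eps=1$ and $K$ the closed disc of radius $1/2$ one has $f(0,1/2)=(5,1/4)\notin N_1(\Gl)$.

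Consequently your induction, whose hypothesis is only that each block starts somewhere in $K$, gives no control on the intermediate points of the second and later blocks. Only the first block, which starts in the forward-invariant set $\Gl$, is under control. Shrinking $K$ so that $K\subset\operatorname{int}K'$ does not help: what you need is that all of $\Phi_\cF^I(K)$, $|I|\le N$, lie in $\operatorname{int}K'$, and $N$ itself depends on $K$. Your first route has a similar hole. The inclusion $\bigcup_{|I|=N}\Phi_\cF^I(A_N)\subset A_0$ concerns exact images and says nothing about where a $\delta$-chain issued from $A_N$ lands, so you cannot restart from $A_0$.

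The paper avoids all this by using a first-exit argument with a sliding window instead of fixed blocks. Take $K$ and $N$ as you do, with $\Phi_\cF^I(K)\subset\operatorname{int}K$ for $|I|=N$, and let $k$ be the first index with $x_k\notin K$.
\begin{itemize}
\item If $k\le N$, the segment $x_0,\dots,x_k$ starts in $\Gl$. Exact orbits from $\Gl$ stay in $\Gl\subset\operatorname{int}K$, so a small $\delta$ excludes this case.
\item If $k>N$, then $x_{k-N},\dots,x_{k-1}\in K$ by minimality of $k$. So the intermediate points lie in $K$ by hypothesis, not by a shadowing claim. Shadowing, via uniform continuity on the compact set $K\cup\bigcup_{|I|\le N}\Phi_\cF^I(K)$, then puts $x_k$ within your margin $\eta$ of $\bigcup_{|I|=N}\Phi_\cF^I(K)$, hence in $K$. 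This is a contradiction.
\end{itemize}

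If you prefer to keep blocks, there are two ways to make them work.
\begin{itemize}
\item Prove Lyapunov stability of $\Gl$ first. Choose $N'$ with $\Phi_\cF^I(K')\subset\operatorname{int}K'$ for $|I|\ge N'$, and take $K$ inside the open set $\operatorname{int}K'\cap\bigcap_{|I|<N'}(\Phi^I_\cF)^{-1}(\operatorname{int}K')$. Then every forward image of $K$ lies in $\operatorname{int}K'$.
\item Let the blocks start in a thin closed neighbourhood $B\subset K$ of your forward-invariant set $Q_N$, arranged to lie in $\operatorname{int}K$. Short exact orbits from $B$ then stay near $Q_N$, and the $N$-step exact images of $B$ land in $Q_N$, so each block returns to $B$.
\end{itemize}
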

\begin{proof}
Choose a compact neighbourhood \(K\) of \(\Gl\) such that $\Gl\subset \operatorname{int}K\subset K\subset N_\varepsilon(\Gl)$.
Since \(\Gl\) attracts \(K\), there exists \(N\ge1\) such that $\P(K)\subset \operatorname{int}K$ for $|I|=N$.
We claim that, for \(\delta>0\) sufficiently small, every \(\delta\)-chain
starting in \(\Gl\) remains in \(K\). 
Indeed, by compactness, finiteness of the set of words of length at most \(N\) and uniform continuity on \(K\), we may choose \(\delta>0\) so small that every \(\delta\)-chain of length at most \(N\) starting in \(\Gl\) remains in \(K\) and every \(\delta\)-chain of length \(N\) whose previous points lie in \(K\) has endpoint in \(K\).

Suppose now that a \(\delta\)-chain $x_0,x_1,\ldots$ starts at \(x_0\in \Gl\) and leaves \(K\). 
Let \(k\) be the first exit time. 
If \(k\le N\), this contradicts the first property above. 
If \(k>N\), then the last \(N\) steps before the exit start in \(K\) and all intermediate points before \(x_k\) lie in \(K\); by the second property, \(x_k\in K\), again a contradiction. Hence every such chain
remains in \(K\subset N_\varepsilon(\Gl)\).
\end{proof}

\begin{lemma}[Restriction to the global attractor]
\label{lemma: restriction}
Assume that $\cF$ has compact dynamics. 
Then, for any $x,y\in\Gl$, $y$ is downstream from $x$ under $\cF$ if and only if $y$ is downstream from $x$ under $\cF|_{\Gl}$.
\end{lemma}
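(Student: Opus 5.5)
The "if" direction is immediate: an $\eps$-chain for $\cF|_{\Gl}$ from $x$ to $y$ is a sequence of points of $\Gl$ with indices $i_n$ such that $d(f_{i_n}(x_n),x_{n+1})<\eps$. Since the maps of $\cF|_{\Gl}$ are restrictions of those of $\cF$ and the metric is the same, it is also an $\eps$-chain for $\cF$. So $x\succcurlyeq_{{\cal C}_{\cF|_{\Gl}}}y$ implies $x\Cto y$. Note that $\cF|_{\Gl}$ is a legitimate IFS on $\Gl$ because $\Gl$ is $H$-invariant, hence forward-invariant.

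For the "only if" direction, fix $\eps>0$. We must produce an $\eps$-chain inside $\Gl$ from $x$ to $y$. The plan is to take a very fine chain for $\cF$, which by Lemma~\ref{lemma: localization} stays close to $\Gl$, and then project each point onto a nearest point of $\Gl$.
\begin{enumerate}
\item By compactness of $\Gl$ and local compactness of $X$, choose a compact neighbourhood $K$ of $\Gl$.
\item Each $f_i$ is uniformly continuous on $K$ and there are finitely many maps. So there is $\eta\in(0,\eps/3)$ with $N_\eta(\Gl)\subset K$ such that, for all $u,v\in K$ with $d(u,v)<\eta$, we have $d(f_i(u),f_i(v))<\eps/3$ for every $i$.
\item By Lemma~\ref{lemma: localization} applied with $\eta$, pick $\delta\in(0,\eps/3)$ such that every $\delta$-chain starting in $\Gl$ lies in $N_\eta(\Gl)$.
\item Take a $\delta$-chain $x=x_0,\dots,x_k=y$ for $\cF$ with indices $i_0,\dots,i_{k-1}$. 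All $x_n$ lie in $N_\eta(\Gl)\subset K$.
\item For each $n$ choose $x_n'\in\Gl$ with $d(x_n,x_n')<\eta$, which is possible by compactness of $\Gl$. Take $x_0'=x$ and $x_k'=y$, which is allowed since both already lie in $\Gl$.
\end{enumerate}
Then, by the triangle inequality,
\[
d(f_{i_n}(x_n'),x_{n+1}')\le d(f_{i_n}(x_n'),f_{i_n}(x_n))+d(f_{i_n}(x_n),x_{n+1})+d(x_{n+1},x_{n+1}')<\tfrac{\eps}{3}+\delta+\eta<\eps .
\]
So $x_0',\dots,x_k'$ is an $\eps$-chain for $\cF|_{\Gl}$ from $x$ to $y$. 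Since $\eps$ was arbitrary, $x\succcurlyeq_{{\cal C}_{\cF|_{\Gl}}}y$.

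The only delicate point is the order of the quantifiers. The modulus $\eta$ must be fixed first, from uniform continuity on a fixed compact $K$. Only then is $\delta$ chosen via Lemma~\ref{lemma: localization}, so that the chain lands inside $K$, where that uniform continuity applies. The localization lemma does the real work here; once it is available the projection step is routine. The trivial case $k=0$ (that is, $x=y$) is covered automatically.
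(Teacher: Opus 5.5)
Your proof is correct and follows essentially the same route as the paper. Both fix a compact neighbourhood $K$ of $\Gl$, take $\eta$ from uniform continuity of the $f_i$ on $K$, then $\delta$ from Lemma~\ref{lemma: localization}, and project a fine chain onto $\Gl$ while keeping the endpoints, with the same $\eps/3+\delta+\eta<\eps$ estimate (the paper's $\rho=\min\{\delta,\eta,\eps/3\}$ plays the role of your $\delta<\eps/3$); you additionally spell out the trivial direction, which the paper leaves implicit.
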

\begin{proof}
We prove the non-trivial inclusion.
Assume that $y$ is downstream $x$ under $\cF$.
Fix an \(\varepsilon>0\) and choose a compact neighbourhood \(K\) of \(\Gl\).
By the uniform continuity of the finitely many maps \(f_i|_K\), there is an \(\eta>0\) so small that
\[
u,v\in K,\quad d(u,v)<\eta
        \quad\Longrightarrow\quad
d(f_i(u),f_i(v))<\varepsilon/3
\]
for every \(i=1,\ldots,m\).
We can assume without loss of generality that \(\eta<\varepsilon/3\)
and that $N_\eta(\Gl)\subset K$.
By Lemma~\ref{lemma: localization}, there exists \(\delta>0\) such that every \(\delta\)-chain starting in \(\Gl\) remains in \(N_\eta(\Gl)\).
Set \(\rho=\min\{\delta,\eta,\varepsilon/3\}\). 
Since \(x\Cto y\), there is a \(\rho\)-chain $x=x_0,x_1,\ldots,x_n=y$
from \(x\) to \(y\). 
By the choice of \(\delta\), every \(x_j\) lies in
\(N_\eta(\Gl)\). Choose \(z_j\in \Gl\) with $d(x_j,z_j)<\eta$
and take \(z_0=x\), \(z_n=y\).
If \(i_j\) is the index used in the \(j\)-th step of the original chain,
then
\[
\begin{aligned}
d(f_{i_j}(z_j),z_{j+1})
&\le d(f_{i_j}(z_j),f_{i_j}(x_j))
   + d(f_{i_j}(x_j),x_{j+1})
   + d(x_{j+1},z_{j+1})  \\
&< \varepsilon/3+\rho+\eta
< \varepsilon .
\end{aligned}
\]
Thus, $z_0,z_1,\ldots,z_n$
is an \(\varepsilon\)-chain inside \(\Gl\) from \(x\) to \(y\). 
Since \(\varepsilon>0\) was arbitrary, $y$ is downstream from $x$ under $\cF|_{\Gl}$.
\end{proof}
\begin{theorem}
    \label{prop: C = C glob}
    Assume that $\cF$ has compact dynamics. 
    Then the chain-recurrent set, the chain nodes and the chain graph of \(\cF\) coincide with those of \(\cF|_{\Gl}\).
\end{theorem}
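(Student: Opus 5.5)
The plan is to reduce everything to two facts: every chain-recurrent point of $\cF$ lies in $\Gl$, and on $\Gl$ the downstream relations of $\cF$ and $\cF|_{\Gl}$ coincide. The second fact is exactly Lemma~\ref{lemma: restriction}. Once both are available, the chain-recurrent sets agree. Since nodes are the equivalence classes of $\Ceq$ on $\cR_\cC$, and edges are determined by $\Cto$ between points of distinct nodes, the nodes and edges, and hence the chain graphs, coincide verbatim.

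\emph{Step 1: $\cR_\cC\subset\Gl$.} This is the step I expect to be the main obstacle. Let $x\in\cR_\cC$. By Lemma~\ref{lemma: Om}, $\Om(x)\neq\varnothing$; pick $p\in\Om(x)$. By Proposition~\ref{prop: omega(K) subset G}, $p\in\Gl$. Choose $I_n$ with $|I_n|\to\infty$ and $z_n=\Phi_\cF^{I_n}(x)\to p$. Since $z_n\in\cO_\cF(x)$ and $x$ is chain-recurrent, $z_n\Cto x$. Closedness of $\cC$ then gives $p\Cto x$.

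Fix $\eps>0$ and take the $\delta$ of Lemma~\ref{lemma: localization}. A $\delta$-chain from $p\in\Gl$ to $x$ stays in $N_\eps(\Gl)$, so its endpoint satisfies $d(x,\Gl)<\eps$. As $\eps$ is arbitrary and $\Gl$ is closed, $x\in\Gl$. Note that this argument avoids any claim that orbit points themselves are recurrent; it uses only that some point of $\Gl$ chains to $x$.

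\emph{Step 2: $\cR_\cC(\cF)=\cR_\cC(\cF|_{\Gl})$.} Let $x\in\Gl$. Because $\Gl$ is $H$-invariant, it is forward-invariant, so $\cO_\cF(x)\subset\Gl$ and the orbit of $x$ is the same under $\cF$ and under $\cF|_{\Gl}$. By Lemma~\ref{lemma: restriction}, for $x,z\in\Gl$ we have $x\Ceq z$ under $\cF$ if and only if $x\Ceq z$ under $\cF|_{\Gl}$. Hence $x$ is chain-recurrent for $\cF$ exactly when it is chain-recurrent for $\cF|_{\Gl}$. Combining this with Step 1, and with the trivial inclusion $\cR_\cC(\cF|_{\Gl})\subset\Gl$, gives equality of the chain-recurrent sets.

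\emph{Step 3: nodes and edges.} Chain equivalence restricted to the common set $\cR_\cC$ is the same relation for both systems, again by Lemma~\ref{lemma: restriction}. So both systems have the same equivalence classes, i.e.\ the same nodes. An edge $M\Cto N$ requires some $x_0\in M$ and $y_0\in N$ with $x_0\Cto y_0$. Since these points lie in $\Gl$, this condition has the same truth value for $\cF$ and for $\cF|_{\Gl}$. Therefore $\Gamma_\cC$ is identical for the two systems.
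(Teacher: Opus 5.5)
Your proposal is correct and follows essentially the same route as the paper. You first show \(\cR_\cC\subset\Gl\): take a limit point \(p\in\Om(x)\subset\Gl\), get \(p\Cto x\) from closedness of \(\cC\), and apply the localization lemma. Then Lemma~\ref{lemma: restriction} transfers the chain-recurrent set, nodes and edges. The only difference is harmless: you take an arbitrary \(p\in\Om(x)\) and use Proposition~\ref{prop: omega(K) subset G} directly, whereas the paper takes \(p\in\Om(x)\cap\cR_\cF\) via Proposition~\ref{prop: R subset G}, and it never actually uses that recurrence.
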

\begin{proof}
We show first that  \(\cR_{\cC}\subset \Gl\). 
Indeed, if \(x\in \cR_{\cC}\), then, by Lemma~\ref{lemma: Om}, there exists \(p\in \Om(x)\cap \cR_\cF\).
By 
Proposition~\ref{prop: R subset G}, \(p\in \Gl\). 
Since \(x\in \cR_\cC\), every point of \(\cO_\cF(x)\) is chain-equivalent to \(x\). 
In particular, if \(p_n=\Pn(x)\) and \(|I_n|\to\infty\), then \(p_n\Cto x\) for all \(n\). Passing to a convergent subsequence \(p_n\to p\in\Om(x)\) and using the closedness of \(\cC\), we obtain \(p\Cto x\).
By the localization lemma applied to arbitrarily small neighbourhoods of \(\Gl\), every point downstream from a
point of \(\Gl\) lies in \(\Gl\). 
Hence, \(x\in \Gl\) and so $\cR_\cC\subset\Gl$.

Lemma~\ref{lemma: restriction} shows that the two chain relations agree on
\(\Gl\times \Gl\). Since all chain-recurrent points of $\cF$ lie in
\(\Gl\), the recurrent sets coincide. Consequently their equivalence
classes, hence their nodes, coincide. The edge relation also coincides
because it is defined entirely in terms of the same relation restricted
to \(\Gl\times \Gl\).
\end{proof}

%
\begin{corollary}
    Assume that $\cF$ has compact dynamics. 
    Then its chain-recurrent set and all of its nodes are compact.
\end{corollary}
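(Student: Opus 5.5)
The plan is to combine Theorem~\ref{prop: C = C glob} with the closedness results already established. By the proposition preceding Proposition~\ref{prop: Om(x) has some rec pt}, $\cR_\cC$ and each of its nodes are closed subsets of $X$. By the first part of the proof of Theorem~\ref{prop: C = C glob}, we have $\cR_\cC\subset\Gl$. Since $\Gl$ is compact and $X$ is metrizable, hence Hausdorff, a closed subset of a compact set is compact. Therefore $\cR_\cC$ is compact.

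Every chain node $N$ is, by definition, a subset of $\cR_\cC$, so $N\subset\Gl$ as well. Since $N$ is closed in $X$, it is a closed subset of the compact set $\Gl$, and hence compact.

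An alternative route avoids closedness in $X$. By Theorem~\ref{prop: C = C glob}, the chain-recurrent set and the nodes of $\cF$ coincide with those of $\cF|_{\Gl}$. Applying the closedness proposition to the IFS $\cF|_{\Gl}$ on the compact space $\Gl$ shows that they are closed in $\Gl$, and therefore compact.

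No real obstacle is expected. The only point needing care is that the inclusion $\cR_\cC\subset\Gl$ is established inside the proof of Theorem~\ref{prop: C = C glob} rather than in its statement. This is harmless, because the theorem's equality of chain-recurrent sets with those of $\cF|_{\Gl}$ already forces $\cR_\cC\subset\Gl$.
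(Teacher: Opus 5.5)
Your proposal is correct and matches the paper's argument. The paper gives no separate proof: the corollary follows from the closedness of $\cR_\cC$ and its nodes together with the inclusion $\cR_\cC\subset\Gl$ established in Theorem~\ref{prop: C = C glob}, which is exactly your first route. Your alternative route through $\cF|_{\Gl}$ is also valid, since the $H$-invariance of $\Gl$ makes the restricted IFS well defined on a compact space.
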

%
    


\medskip\noindent
{\BF Chain-recurrence, forward-invariance and $H$-invariance.}
When $m=1$, $H$-invariance coincides with invariance and the chain-recurrent set of a semiflow with compact dynamics and all of its nodes are invariant~\cite{DLY25,DLY26a}.
The reader can verify that forward-invariance still holds for the chain-recurrent set and each node when $m>1$.
These sets are not necessarily $H$-invariant but we have the following weaker property.
\begin{theorem}
    \label{thm: N is H-invariant}
    Assume that $\cF$ has compact dynamics and let $N$ be one of its chain nodes.
    Then $N$ is forward-invariant and $\Om(N)$ is a non-empty compact $H$-invariant chain-equivalent subset of $N$.
\end{theorem}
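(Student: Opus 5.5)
Forward-invariance of $N$ comes first. Let $x\in N$ and $i\in\{1,\dots,m\}$, and set $y=f_i(x)$. Since $x\in\cR_\cC$ and $y\in\cO_\cF(x)$, we have $x\Ceq y$. For any $z\in\cO_\cF(y)$ we have $z\in\cO_\cF(x)$, so $z\Ceq x$. By transitivity of $\cC$, $z\Ceq y$. Hence $y\in\cR_\cC$, and since $y\Ceq x$, the point $y$ lies in the same node $N$. Thus $f_i(N)\subset N$ for every $i$.

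Next I show that $\Om(N)$ is non-empty, compact, $H$-invariant and contained in $N$. By the Corollary following Theorem~\ref{prop: C = C glob}, $N$ is compact, and $N\neq\varnothing$ as an equivalence class.
\begin{itemize}
\item \emph{Containment.} Because $N$ is forward-invariant, $\P(N)\subset N$ for every $I$. So every limit of $\Pn(a_n)$ with $a_n\in N$ lies in $\overline N=N$ (nodes are closed), giving $\Om(N)\subset N$.
\item \emph{Non-emptiness.} For $x\in N$ we have $\varnothing\neq\Om(x)\subset\Om(N)$ by Lemma~\ref{lemma: Om}.
\item \emph{Compactness.} $\Om(N)$ is closed by Proposition~\ref{prop: topological description of Om} and lies in the compact set $N$.
\item \emph{$H$-invariance.} Proposition~\ref{prop: omega(K) subset G} applies to the compact forward-invariant set $K=N$. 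Alternatively, one repeats the argument of Proposition~\ref{thm: Q => G} with $Q$ replaced by $N$. That argument used only compactness and forward-invariance of $Q$, which give $E_n=\bigcup_{|I|=n}\P(N)$ compact and decreasing, and then the subsequence extraction $x=f_{i}(y)$ with $y\in\Om(N)$.
\end{itemize}

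Finally, chain-equivalence of $\Om(N)$ is immediate: $\Om(N)\subset N$ and any two points of a node are chain-equivalent by definition.

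I expect the only step needing real care to be $H$-invariance, specifically the surjectivity $\Om(N)\subset H_\cF(\Om(N))$. That is where compactness of $N$ (from Theorem~\ref{prop: C = C glob}, via $N\subset\Gl$ closed) is essential, because it lets us extract convergent preimages $y_n\to y$. I would write it out explicitly rather than rely on the terse ``standard argument'' in Proposition~\ref{prop: omega(K) subset G}.
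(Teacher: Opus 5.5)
Your proof is correct and follows the same route as the paper. The paper simply cites compactness and forward-invariance of $N$ together with Lemma~\ref{lemma: Om} and Proposition~\ref{prop: omega(K) subset G}, while you also write out what it leaves to the reader: forward-invariance of nodes, the inclusion $\Om(N)\subset N$, and the surjectivity half of $H$-invariance. You also read off chain-equivalence of $\Om(N)$ directly from $\Om(N)\subset N$, where the paper invokes closedness of the chain relation; this amounts to the same thing.
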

\begin{proof}
    We know that $N$ is compact and forward-invariant.
    Hence, by Lemma~\ref{lemma: Om} and Proposition~\ref{prop: omega(K) subset G}, $\Om(N)$ is $H$-invariant and non-empty.
    Since $N$ is chain-equivalent and $\cC$ is closed, then $\Om(N)$ is chain-equivalent as well.
\end{proof}

\medskip\noindent
{\bf Recurrent and gradient dynamics of an IFS.}
The following result shows that chain nodes can be considered as sets where the dynamics of $\cF$ is of recurrent type.
\begin{proposition}
    Assume that \(\cF\) has compact dynamics and let $N$ be a chain node of $\cF$. 
    Then $N\cap \cR_\cF\neq\varnothing$.
\end{proposition}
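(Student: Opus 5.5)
Pick any point $x\in N$. Since $N$ is forward-invariant (Theorem~\ref{thm: N is H-invariant}) and closed (each node is closed), every point $\P(x)$ lies in $N$, so $\Om(x)\subset N$. By Lemma~\ref{lemma: Om}, $\Om(x)\cap\cR_\cF\neq\varnothing$, and combining the two gives $N\cap\cR_\cF\neq\varnothing$. This is the whole argument, so the plan reduces to checking the two ingredients.

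First, forward-invariance of $N$. Theorem~\ref{thm: N is H-invariant} asserts it, but the earlier text leaves the verification to the reader, so I would check it directly. Let $x\in N$ and $z=\P(x)$. Since $x\in\cR_\cC$, we have $x\Ceq z$. To see that $z\in\cR_\cC$, take any $w\in\cO_\cF(z)$. Then $w\in\cO_\cF(x)$, so $w\Ceq x$, and transitivity of $\cC$ gives $w\Ceq z$. Hence $z\in\cR_\cC$, and since $z\Ceq x$, the point $z$ lies in the same node as $x$. So $\cO_\cF(x)\subset N$.

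Second, the containment $\Om(x)\subset N$. By Proposition~\ref{prop: topological description of Om}, $\Om(x)\subset\overline{\cO_\cF(x)}$, and $\overline{\cO_\cF(x)}\subset N$ because $N$ is closed. Lemma~\ref{lemma: Om} applies because $\cF$ has compact dynamics.

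There is no real obstacle. The only point needing care is the forward-invariance of a node. Alternatively, I could bypass it: any $p\in\Om(x)\cap\cR_\cF$ is a limit of $p_n=\Pn(x)$, each satisfying $x\Ceq p_n$, so closedness of $\cC$ gives $x\Ceq p$. Since $p\in\cR_\cF\subset\cR_\cC$ by Proposition~\ref{prop: at least one node}, $p$ lies in the same node $N$.
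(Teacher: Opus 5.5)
Your proof is correct and uses the same idea as the paper: apply Lemma~\ref{lemma: Om} to a point whose limit set is kept inside $N$ by closedness and forward-invariance. The paper takes that point in $\Om(N)$ via Theorem~\ref{thm: N is H-invariant}, whereas you take it in $N$ itself, which is a slight shortcut; your direct check of forward-invariance, or alternatively your argument via closedness of $\cC$ and $\cR_\cF\subset\cR_\cC$, also supplies the step the paper leaves to the reader.
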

\begin{proof}
    By Theorem~\ref{thm: N is H-invariant}, the set \(\Om(N)\) is non-empty, compact, \(H\)-invariant, and contained in \(N\). 
    Choose \(x\in \Om(N)\).
    By Lemma~\ref{lemma: Om}, there exists $p\in \Om(x)\cap \cR_\cF$.
    Since \(\Om(N)\) is forward-invariant and \(x\in\Om(N)\), we have $\Om(x)\subset \Om(N)$.
    Therefore $p\in \Om(N)\subset N$.
    Hence \(p\in N\cap \cR_\cF\).
\end{proof}
In other words, $\cC$ allows to sort canonically $\cR_\cF$ into elementary blocks. 
The points that are chain-recurrent but not recurrent make $\cR_{\cC}$ and its nodes closed, unlike $\cR_\cF$, which in general is not.

Now, notice that, for each $x\in X$, each point of $\Om(x)$ is downstream from $x$ and $\Om(x)$, by Proposition~\ref{prop: Om(x) has some rec pt}, intersects at least one chain node.
Hence, each $x$ is upstream from some chain node. 
This shows that every $x\not\in\cR_\cC$ has a dynamics of gradient type, in the sense that there are points $y\in X$ asymptotically reachable by $x$ such that $y$ cannot chain back to $x$ -- if every such $y$ would chain back to $x$ then it easily follows that $x\in\cR_\cC$, against the initial assumption.
Hence, $\cC$ allows naturally to separate the dynamics of $\cF$ into recurrent and gradient components.

\section{The Hutchinson map}
\label{sec: Hutch}


\medskip\noindent{\bf The space of all compact subspaces.}
We denote by {\BF$\KX$} the space of all non-empty compact subspaces of $X$.
We endow this space with the 
 {\bf Hausdorff metric}
\[
\text{\BF$\dH(A,B)$}=\max\Bigl\{\sup_{a\in A} d(a,B),\ \sup_{b\in B} d(b,A)\Bigr\},\quad A,B\in\KX.
\]
When $(X,d)$ is compact or locally compact, so is $(\KX,d_H)$.
This space is often referred to in literature as the {\bf hyperspace} of $X$.

\medskip\noindent
{\bf The Hutchinson map.}
By {\bf Hutchinson map} of $\cF$ we mean the map $H_\cF:\KX\to\KX$ defined by
\[
\text{\BF$H_\cF(K)$}=\bigcup_{i=1}^m f_i(K),\quad K\in\KX.
\]


\medskip\noindent
{\BF Trapping regions and global attractor of $H_\cF$.}
The map $H_\cF$ defines a discrete-time semiflow, i.e. an IFS with $m=1$, on $\KX$.
Hence, all results of the previous sections apply to it.

\begin{proposition}[Trapping regions and the Hutchinson map]
\label{prop: Q <-> K(Q)}
The following hold:
\begin{enumerate}
\item If \(Q\subset X\) is a compact global trapping region for \(\cF\), then \(\mathcal K(Q)\) is a compact global trapping region for \(H_\cF\).

\item Conversely, if \(\mathcal Q\subset\mathcal K(X)\) is a compact
global trapping region for \(H_\cF\), then
$Q=\cup_{A\in\mathcal Q} A$
is a compact global trapping region for \(\cF\).
\end{enumerate}
\end{proposition}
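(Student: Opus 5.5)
For part 1, I will verify directly the three properties of $\mathcal K(Q)$. \emph{Compactness}: $Q$ is compact, hence $\mathcal K(Q)$ is compact in $(\KX,\dH)$ (the standard Blaschke-type fact already recalled in the paper for compact spaces). It is closed in $\KX$, being compact. \emph{Forward-invariance}: if $A\subset Q$ is compact and non-empty, then each $f_i(A)\subset f_i(Q)\subset Q$, so $H_\cF(A)\in\mathcal K(Q)$. \emph{Absorption}: let $\mathcal C\subset\KX$ be compact. I first show that $C=\bigcup_{A\in\mathcal C}A$ is compact in $X$. Take a sequence $x_n\in A_n\in\mathcal C$, pass to a subsequence with $A_n\to A$ in $\dH$, and pick $a_n\in A$ with $d(x_n,a_n)\le \dH(A_n,A)\to0$. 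A further subsequence gives $a_n\to a\in A$, so $x_n\to a\in C$. Since $Q$ absorbs $C$ under $\cF$, there is $\tau$ with $\P(C)\subset Q$ for all $|I|\ge\tau$. Then for $n\ge\tau$ and $A\in\mathcal C$ we have $H_\cF^n(A)=\bigcup_{|I|=n}\P(A)\subset Q$, so $H_\cF^n(\mathcal C)\subset\mathcal K(Q)$.

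For part 2, compactness of $Q=\bigcup_{A\in\mathcal Q}A$ follows from the same union lemma, now applied to $\mathcal Q$; in particular $Q$ is closed. \emph{Forward-invariance}: for $x\in A\in\mathcal Q$ we have $f_i(x)\in H_\cF(A)\in\mathcal Q$, so $f_i(x)\in Q$. \emph{Absorption}: let $K\subset X$ be compact. If $K=\varnothing$ there is nothing to prove. Otherwise consider the set $\mathcal C=\{\{x\}:x\in K\}$, which is compact in $\KX$ because $x\mapsto\{x\}$ is an isometry. There is then $\tau$ with $H_\cF^n(\{x\})\in\mathcal Q$ for all $n\ge\tau$ and all $x\in K$. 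For $|I|=n\ge\tau$ this gives $\P(x)\in H_\cF^n(\{x\})\subset Q$, hence $\P(K)\subset Q$ for every $I\in\cI^m_\tau$.

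The only genuinely non-formal step is the lemma that the union of a compact family of compact sets is compact. I will prove it once, by the sequential argument above, and use it in both parts. I must also take care that the absorption notion for $H_\cF$, which is a semiflow with $m=1$, means absorbing for all iterates $n\ge\tau$. This matches the multi-index definition with $|I|=n$, since $H_\cF^n(A)=\bigcup_{|I|=n}\P(A)$.
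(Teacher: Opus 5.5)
Your proof is correct and follows the paper's argument step by step: compactness of $\mathcal K(Q)$, forward-invariance, absorption via the compact union $\bigcup_{A\in\mathcal C}A$ and the identity $H_\cF^n(A)=\bigcup_{|I|=n}\P(A)$, and the converse via the union over $\mathcal Q$. You also prove the union-of-a-compact-family lemma, which the paper only asserts. The one small divergence is in the absorption step of part 2: the paper uses the single point $K\in\KX$, so that $\P(K)\subset H_\cF^n(K)\in\mathcal Q$ directly, which makes your detour through the compact family of singletons valid but unnecessary.
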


\begin{proof}
We use the identity $H_\cF^n(A)=\bigcup_{|I|=n}\P(A)$.

Assume first that \(Q\subset X\) is a compact global trapping region for
\(\cF\). 
Then \(\mathcal K(Q)\) is compact. 
Since \(Q\) is forward-invariant,
\(H_\cF(A)\subset Q\) for every \(A\in\mathcal K(Q)\), so \(\mathcal K(Q)\) is forward-invariant. 
Let \(\mathcal C\subset\mathcal K(X)\) be compact.
Then $U=\bigcup_{A\in\mathcal C}A$
is compact in \(X\). 
Since \(Q\) absorbs \(U\), there exists \(N\) such that \(\Phi_\cF^I(U)\subset Q\) for all \(|I|\ge N\). 
Therefore,
\(H_\cF^n(A)\subset Q\) for every \(A\in\mathcal C\) and every \(n\ge N\).
Thus, \(H_\cF^n(\mathcal C)\subset\mathcal K(Q)\) for all \(n\ge N\), so \(\mathcal K(Q)\) is a compact global trapping region for \(H_\cF\).

Conversely, assume that \(\mathcal Q\subset\mathcal K(X)\) is a compact
global trapping region for \(H_\cF\). 
The union $Q=\bigcup_{A\in\mathcal Q}A$ is compact. 
If \(x\in Q\), then \(x\in A\) for some \(A\in\mathcal Q\).
Since \(\mathcal Q\) is forward-invariant under \(H_\cF\), then $f_i(x)\in H_\cF(A)\subset Q$
for every \(i\), so \(Q\) is forward-invariant. 
Finally, let \(B\subset X\) be compact. 
Since \(\{B\}\) is a compact subset of \(\mathcal K(X)\), there exists \(N\) such that $H_\cF^n(B)\in\mathcal Q$ for $n\ge N$.
Hence, for every \(|I|=n\ge N\), $\P(B)\subset H_\cF^n(B)\subset Q$.
Thus \(Q\) absorbs every compact subset of \(X\), and is therefore a
compact global trapping region for \(\cF\).
\end{proof}

\begin{theorem}
    \label{thm: F comp dyn iff H comp dyn}
    \(\cF\) has compact dynamics if and only if \(H_\cF\) has compact dynamics.
    Moreover, when these conditions hold, if \(\mathcal G_H\subset \mathcal K(X)\) is the global attractor of \(H_\cF\), then $\mathcal G_H\subset \mathcal K(\Gl)$ and $\Gl=\bigcup_{A\in \mathcal G_H} A$.
\end{theorem}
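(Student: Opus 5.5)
The equivalence itself will come from Theorem~\ref{thm: compact dynamics = compact gl tr reg}, applied twice, together with Proposition~\ref{prop: Q <-> K(Q)}. The map $H_\cF$ is an IFS with $m=1$ on the locally compact space $\KX$, so that theorem applies to it as well. If $\cF$ has compact dynamics, it has a compact global trapping region $Q$. By Proposition~\ref{prop: Q <-> K(Q)}(1), $\mathcal K(Q)$ is then a compact global trapping region for $H_\cF$, so $H_\cF$ has compact dynamics. Conversely, a compact global trapping region $\mathcal Q$ for $H_\cF$ yields the compact global trapping region $\bigcup_{A\in\mathcal Q}A$ for $\cF$, by Proposition~\ref{prop: Q <-> K(Q)}(2).

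To show $\mathcal G_H\subset\mathcal K(\Gl)$, I would fix $A\in\mathcal G_H$. Since $\mathcal G_H$ is $H$-invariant for $H_\cF$, i.e. $H_\cF(\mathcal G_H)=\mathcal G_H$, for every $n$ there is $A_n\in\mathcal G_H$ with $H_\cF^n(A_n)=A$. The set $U=\bigcup_{B\in\mathcal G_H}B$ is compact, because $\mathcal G_H$ is compact in $\KX$. Since $\Gl$ attracts $U$, for every $\eps>0$ there is $n$ with $A=H_\cF^n(A_n)\subset\bigcup_{|I|=n}\P(U)\subset N_\eps(\Gl)$. As $\Gl$ is closed, $A\subset\Gl$.

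For the equality $\Gl=\bigcup_{A\in\mathcal G_H}A$, the inclusion $\supset$ follows from the previous step. For $\subset$, I would note that $\Gl\in\KX$ is a fixed point of $H_\cF$, since $H_\cF(\Gl)=\Gl$. The singleton $\{\Gl\}$ is compact in $\KX$, so $\mathcal G_H$ attracts it. Hence $d_H(\Gl,\mathcal G_H)=d_H(H^n_\cF(\Gl),\mathcal G_H)\to 0$, and because $\mathcal G_H$ is closed, $\Gl\in\mathcal G_H$. This gives $\Gl\subset\bigcup_{A\in\mathcal G_H}A$.

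An alternative route to the equality would be to show that $V=\bigcup_{A\in\mathcal G_H}A$ is a compact $H$-invariant set attracting all compacts and then invoke Proposition~\ref{prop: G is unique}. That route needs more care, for example a diagonal argument to get attraction in $X$ from attraction of singletons $\{K\}$ in $\KX$. The fixed-point argument avoids this, so I will use it. The only point I expect to need care is that $\bigcup_{B\in\mathcal G_H}B$ is compact for a compact family in the Hausdorff metric. This is the standard fact already used in the proof of Proposition~\ref{prop: Q <-> K(Q)}, and I would cite it there rather than reprove it.
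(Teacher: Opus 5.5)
Your proof is correct. The equivalence and the inclusion $\mathcal G_H\subset\mathcal K(\Gl)$ are argued exactly as in the paper. The difference is in the equality $\Gl=\bigcup_{A\in\mathcal G_H}A$.

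The paper sets $G=\bigcup_{A\in\mathcal G_H}A$. It checks that $G$ is compact, $H$-invariant and attracts every compact $K\subset X$, and then concludes $G=\Gl$ by uniqueness of the global attractor (Proposition~\ref{prop: G is unique}). You instead observe that $\Gl$ is itself a fixed point of $H_\cF$ in $\KX$. Since $\mathcal G_H$ attracts $\{\Gl\}$ and is closed, $\Gl\in\mathcal G_H$, and combined with $\mathcal G_H\subset\mathcal K(\Gl)$ this gives the equality.

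Your route is shorter and yields slightly more: $\Gl$ is an element of $\mathcal G_H$, in fact its largest element under inclusion. This is an instance of the general fact that a global attractor contains every compact invariant set. The paper's route, by contrast, does not use the existence of $\Gl$ in that step. It therefore gives, on its own, a direct proof that compact dynamics of $H_\cF$ implies compact dynamics of $\cF$, together with an explicit formula for $\Gl$.

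Your worry that this route would need a diagonal argument is unfounded. If $\dH(H^n_\cF(K),A_n)<\eps$ with $A_n\in\mathcal G_H$, then, since $H^n_\cF(K)=\bigcup_{|I|=n}\P(K)$, one gets $\P(K)\subset N_\eps(A_n)\subset N_\eps(G)$ for every $|I|=n$. Taking all $n\ge N$, this is exactly attraction of $K$ under $\cF$.
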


\begin{proof}
Assume first that \(\cF\) has compact dynamics. 
By Theorem~\ref{thm: compact dynamics = compact gl tr reg}, \(\cF\) has a compact global trapping region \(Q\subset X\). 
By Proposition~\ref{prop: Q <-> K(Q)}, \(\mathcal K(Q)\) is a compact global trapping region
for \(H_\cF\). 
Hence, \(H_\cF\) has compact dynamics.
Let \(\mathcal G_H\) be the global attractor of \(H_\cF\). 
We first prove that $\mathcal G_H\subset \mathcal K(\Gl)$.

Let \(A\in \mathcal G_H\). Since \(\mathcal G_H\) is invariant under \(H_\cF\), for every \(n\ge1\) there exists \(A_n\in \mathcal G_H\) such that $A=H_\cF^n(A_n)$.
Since \(\mathcal G_H\) is compact in \(\mathcal K(X)\), the set $U=\bigcup_{B\in\mathcal G_H}B$ is compact in \(X\). 
Since \(\Gl\) attracts every compact subset of \(X\), for every \(\varepsilon>0\) there exists \(N\ge1\) such that $\P(U)\subset N_\varepsilon(\Gl)$ for all $|I|\ge N$.
Using the identity $H_\cF^n(B)=\bigcup_{|I|=n}\P(B)$,
we get $H_\cF^n(B)\subset N_\varepsilon(\Gl)$ for every $B\in\mathcal G_H,\ n\ge N$.
In particular, for \(n\ge N\), $A=H_\cF^n(A_n)\subset N_\varepsilon(\Gl)$.
Since \(\varepsilon>0\) was arbitrary and \(\Gl\) is closed, it follows that \(A\subset \Gl\). Thus \(A\in\mathcal K(\Gl)\), and therefore $\mathcal G_H\subset \mathcal K(\Gl)$.

Now, set $G=\bigcup_{A\in\mathcal G_H}A$.
Again, \(G\) is compact. 
We show that \(G\) is \(H\)-invariant under \(\cF\). 
Since \(\mathcal G_H\) is invariant under \(H_\cF\),
\[
H_\cF(G)
=
\bigcup_{A\in\mathcal G_H} H_\cF(A) =
\bigcup_{B\in H_\cF(\mathcal G_H)} B 
=
\bigcup_{B\in \mathcal G_H} B 
=G.
\]
Hence, \(G\) is \(H\)-invariant.
Now, let
\(K\subset X\) be compact. 
Since \(\mathcal G_H\) attracts the singleton \(\{K\}\subset\mathcal K(X)\), for every \(\varepsilon>0\) there exists \(N\ge1\) such that $
d_H\bigl(H_\cF^n(K),\mathcal G_H\bigr)<\varepsilon$ for $n\ge N$.
Therefore, for every \(n\ge N\), there exists \(A_n\in\mathcal G_H\)
such that $H_\cF^n(K)\subset N_\varepsilon(A_n)\subset N_\varepsilon(G)$.
Equivalently, $\P(K)\subset N_\varepsilon(G)$ for all $|I|=n\ge N$.
Thus, \(G\) attracts every compact subset of \(X\). 
Since \(G\) is compact, \(H\)-invariant and attracts every compact subset of \(X\), $G$ is the global attractor of $\cF$. Hence, $\Gl=\bigcup_{A\in\mathcal G_H}A$.

Conversely, assume that \(H_\cF\) has compact dynamics. 
By Theorem~\ref{thm: compact dynamics = compact gl tr reg} applied to the discrete semiflow \(H_\cF\), there exists a compact global trapping region $\mathcal Q\subset\mathcal K(X)$ for \(H_\cF\). 
By Proposition~\ref{prop: Q <-> K(Q)}, $Q=\bigcup_{A\in\mathcal Q}A$
is a compact global trapping region for \(\cF\). 
Hence, again by Theorem~\ref{thm: compact dynamics = compact gl tr reg}, \(\cF\) has compact dynamics.
\end{proof}

\medskip\noindent
{\BF Chains of $H_\cF$.}
Let $\eps>0$. 
An $\eps$-chain from $K$ to $L$ is a finite sequence
$K=K_0,K_1,\dots,K_n=L$
such that
$\dH\bigl(H_\cF(K_j),K_{j+1}\bigr)<\eps
\quad\text{for all }j=0,\dots,n-1$.
We write that $K\Hto L$ if, for every $\eps>0$, there is an $\eps$-chain from $K$ to $L$.
In this case, we say that $L$ is $H_\cF$-downstream from $K$.
We set
\[
{\cal C}_{H_\cF} = \{(K,L)\in \KX\times \KX:\ K\Hto L\}.
\]
This relation is closed and transitive for any semiflow~\cite{DLY25,DLY26a}.
Chain nodes, edges and graph of $H_\cF$ are defined in the same way as for $\cF$.


\medskip\noindent
{\bf Contractive IFSs.}
We recall the following seminal result of Hutchinson.
\begin{thmX}[Hutchinson, 1981~\cite{Hut81}]
    \label{thm: Hutch}
    Assume that $\cF$ is contractive and $X$ is a complete metric space.
    Then $H_\cF$ is a contraction in $\KX$ and so, in particular, it has a unique fixed point $K$ and every point in $\KX$ asymptotes to $K$ under $H_\cF$. 
\end{thmX}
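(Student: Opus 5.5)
The statement is Hutchinson's classical theorem, so I follow the standard route: show that $H_\cF$ is a contraction of $(\KX,\dH)$ with the same constant as the maps, show that $(\KX,\dH)$ is complete, and then invoke the Banach fixed point theorem. Let $c=\max_i c_i<1$, where $c_i$ is the Lipschitz constant of $f_i$.

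\textbf{Contraction estimate.} For $A,B\in\KX$ the key inequality is
\[
\dH\Bigl(\bigcup_i f_i(A),\bigcup_i f_i(B)\Bigr)\le \max_i \dH\bigl(f_i(A),f_i(B)\bigr).
\]
This follows directly from the sup–inf definition. Given a point $f_i(a)$, its distance to $\bigcup_j f_j(B)$ is at most its distance to $f_i(B)$, and the symmetric bound holds for points of the union over $B$.

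Next, $\dH(f_i(A),f_i(B))\le c_i\,\dH(A,B)$. Indeed, $d(f_i(a),f_i(B))\le c_i\, d(a,B)$, because the infimum over $b$ of $d(f_i(a),f_i(b))$ is bounded by $c_i$ times the infimum of $d(a,b)$. Together these give $\dH(H_\cF(A),H_\cF(B))\le c\,\dH(A,B)$.

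We also need $H_\cF(A)$ to be non-empty and compact. This holds because each $f_i(A)$ is a continuous image of a compact set and the union is finite.

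\textbf{Completeness of $(\KX,\dH)$ when $X$ is complete.} This is the step I expect to be the main obstacle, since the paper only records local compactness of the hyperspace. Take a Cauchy sequence $(A_n)$ in $\KX$ and set
\[
A=\bigcap_n \overline{\bigcup_{k\ge n}A_k}.
\]

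First, $A$ is totally bounded. Given $\eps$, pick $n$ with $\dH(A_k,A_n)<\eps$ for $k\ge n$. Then $\bigcup_{k\ge n}A_k\subset N_\eps(A_n)$, which is covered by finitely many $2\eps$-balls. Since $A$ is closed in the complete space $X$, it is compact.

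Second, $A\neq\varnothing$ and $A_n\to A$. Pass to a subsequence with $\dH(A_{n_j},A_{n_{j+1}})<2^{-j}$. Any $a\in A_{n_j}$ then extends to a sequence $a_{n_l}\in A_{n_l}$ ($l\ge j$) with consecutive distances less than $2^{-l}$. This sequence is Cauchy, so it converges to a point of $A$ within $2^{1-j}$ of $a$. Hence $A_{n_j}\subset N_{2^{1-j}}(A)$. Conversely, every point of $A$ is a limit of points from the $A_k$ with $k$ large, which gives $A\subset N_\eps(A_k)$ for $k$ large. So $A_n\to A$ in $\dH$.

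\textbf{Conclusion.} The Banach fixed point theorem applied to $H_\cF$ on the complete space $(\KX,\dH)$ now gives a unique fixed point $K$. It also gives $\dH(H_\cF^n(L),K)\le c^n\,\dH(L,K)\to0$ for every $L\in\KX$, which is the asserted asymptotic convergence.
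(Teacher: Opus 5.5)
Your proof is correct. It is the standard argument for this theorem: $H_\cF$ is a $\max_i c_i$-contraction of $(\KX,\dH)$, the hyperspace of a complete space is complete (your argument for this is sound), and Banach's fixed point theorem gives the unique fixed point together with the convergence $\dH(H_\cF^n(L),K)\le c^n\dH(L,K)$. The paper itself gives no proof here; it only quotes the result from Hutchinson, whose classical proof follows essentially the same route, applying the contraction principle to non-empty closed bounded sets and then checking that the fixed point is compact.
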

In our framework, Hutchinson's result can be expressed as follows.
%
%
%
\begin{proposition}
\label{prop: R_C=K}
Assume that $\cF$ is a contractive IFS with compact dynamics and let $K\in\KX$ be the unique fixed point of $H_\cF$.
Then $\Gl=K\subset X$ and $K$ is also the only chain node of $\cF$, namely $\cR_\cC=K$ and all points of $K$ are chain-equivalent to each other.
\end{proposition}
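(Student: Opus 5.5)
The plan has three steps: identify $K$ with $\Gl$, show every point of $K$ is chain-recurrent, and show no point outside $K$ is chain-recurrent.

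\emph{Step 1: $\Gl=K$.} Since $\cF$ has compact dynamics, $\Gl$ exists. By $H$-invariance, $H_\cF(\Gl)=\Gl$, so $\Gl$ is a fixed point of $H_\cF$ in $\KX$. The subtlety is that Theorem~\ref{thm: Hutch} asks for completeness, which $X$ need not have. To avoid it, we argue directly with contractivity: if $L=\max_i \mathrm{Lip}(f_i)<1$, then $\dH(H_\cF(A),H_\cF(B))\le L\,\dH(A,B)$ holds in any metric space. Hence two fixed points $A,B$ satisfy $\dH(A,B)\le L\,\dH(A,B)$, which forces $A=B$. Therefore $\Gl=K$.

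\emph{Step 2: every point of $K$ is chain-recurrent.} Fix $x\in K$. We show that $\Om(x)=K$.
\begin{itemize}
\item For $y\in K$, invariance of $K$ gives, for each $n$, points $y_n\in K$ and words $I_n$ with $|I_n|=n$ and $\Phi_\cF^{I_n}(y_n)=y$.
\item Contractivity gives $d(\Phi_\cF^{I_n}(x),y)\le L^n d(x,y_n)\le L^n\operatorname{diam}K\to0$.
\end{itemize}
So $y\in\Om(z)$ for every $z\in K$. Since $K$ is forward-invariant, every $z\in\cO_\cF(x)$ lies in $K$, and in particular $x\in\Om(z)$. Thus $x\in\cR_\cF$, and by Proposition~\ref{prop: at least one node}, $x\in\cR_\cC$. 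The same argument gives $y\Cto z$ for all $y,z\in K$: approach $z$ along an exact orbit of $y$ and make one final small jump. Hence all points of $K$ are chain-equivalent and $K$ lies in a single node.

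\emph{Step 3: no point outside $K$ is chain-recurrent.} By Theorem~\ref{prop: C = C glob}, $\cR_\cC\subset\Gl=K$. Combined with Step 2 this gives $\cR_\cC=K$, and $K$ is the only chain node. This step needs the earlier theorem and is otherwise routine.

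The main obstacle is Step 1, specifically the use of Hutchinson's theorem without completeness. The direct contraction estimate on $\dH$ sidesteps it, since compactness of $\Gl$ supplies the fixed point and only uniqueness is needed.
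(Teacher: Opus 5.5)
Your argument is correct, but it takes a different route from the paper, which proves this statement exactly as it proves Theorem~\ref{thm: LY}. The paper works in the hyperspace. Hutchinson's theorem gives $H_\cF^n(A)\to K$ for every $A\in\KX$, so ${\cal G}_{H_\cF}=\{K\}$ and $H_\cF$ has a single chain node. Theorem~\ref{thm: H one node => F one node} (i.e.\ the injectivity of $N\mapsto\Node_{{\cal C}_{H_\cF}}(\Om(N))$) then leaves $\cF$ with at most one node $N$. Finally, Theorem~\ref{thm: N is H-invariant} and the uniqueness of non-empty compact $H$-invariant sets force $\Om(N)=K\subset N\subset\Gl=K$.

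You instead stay in $X$ and use the uniform constant $L<1$ directly. The bound $L^n\operatorname{diam}K$ gives $K\subset\Om(x)$ for every $x\in K$. Hence $K\subset\cR_\cF\subset\cR_\cC$ with all points mutually chain-equivalent, and $\cR_\cC\subset\Gl$ from Theorem~\ref{prop: C = C glob} closes the argument.

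Your version has several advantages:
\begin{itemize}
\item It is more elementary and never touches the chain graph of $H_\cF$ or Lemma~\ref{prop: N->Node(N) is injective}.
\item It yields the stronger fact that every point of $K$ is genuinely recurrent.
\item It makes explicit that completeness of $X$ is unnecessary: existence comes from $\Gl$, and uniqueness from the $\dH$-contraction estimate. The paper's appeal to Hutchinson's theorem, which is stated for complete $X$, leaves this implicit.
\end{itemize}
The paper's route, in exchange, uses only uniqueness and global attraction of the fixed point of $H_\cF$. It therefore transfers verbatim to the strictly distance-decreasing but non-contractive maps of the Levitt--Yoccoz gasket. There no $L<1$ exists, and your $L^n$ estimate would have to be replaced by a uniform modulus of contraction.

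One minor point: in Step~2 you only prove $K\subset\Om(x)$, not the claimed equality. That inclusion is all you use, and equality follows from Proposition~\ref{prop: omega(K) subset G}.
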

The proof is essentially the same as the proof of Theorem~\ref{thm: LY}.

\medskip\noindent
{\BF Relations between $\cC$ and ${\cal C}_{H_\cF}$.}
Recall that $\Gl$ is $H$-invariant under $\Phi_\cF$.
Hence, when $\cF$ has compact dynamics, the map $H_\cF$ has at least one fixed point. 
Moreover, by Theorem~\ref{thm: N is H-invariant}, to each node $N$ of $\cF$ it corresponds a fixed point of 
$H_\cF$ given by the compact set $\Om(N)$.
Hence, we have a map $N\mapsto\Node_{{\cal C}_{H_\cF}}(\Om(N))$  from nodes of $\cC$ to nodes of ${\cal C}_{H_\cF}$.
We show in the next proposition that this map is injective.
In particular, this shows that the chain graph of $H_\cF$ has at least as many nodes as the chain graph of $\Phi_\cF$.

\begin{lemma}
    \label{prop: N->Node(N) is injective}
    Assume that $\cF$ has compact dynamics and let \(N_1,N_2\) be two distinct chain nodes of $\cF$. 
    Then
    $
    \operatorname{Node}_{{\cal C}_{H_F}}(\Om(N_1))
    \neq
    \operatorname{Node}_{{\cal C}_{H_F}}(\Om(N_2)).
    $
\end{lemma}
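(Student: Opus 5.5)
We argue by contradiction. Suppose $A_1=\Om(N_1)$ and $A_2=\Om(N_2)$ lie in the same chain node of $H_\cF$, so that $A_1\Hto A_2$ and $A_2\Hto A_1$. We will show that this yields a point of $N_1$ downstream from a point of $N_2$ under $\cF$, and vice versa. Then $N_1\Cto N_2$ and $N_2\Cto N_1$ by transitivity, so $N_1$ and $N_2$ would be the same chain node, which is a contradiction. By symmetry it suffices to prove the following transfer statement.

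\emph{Transfer statement.} If $K\Hto L$ in $\KX$, then for every $y\in L$ there is an $x\in K$ with $x\Cto y$.

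We apply it with $K=A_1\subset N_1$ and $L=A_2\subset N_2$, both non-empty by Theorem~\ref{thm: N is H-invariant}. This gives some $x\in N_1$ and $y\in N_2$ with $x\Cto y$.

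To prove the transfer statement, fix $\eps>0$ and take an $\eps$-chain $K=K_0,\dots,K_n=L$ for $H_\cF$, so that $\dH(H_\cF(K_j),K_{j+1})<\eps$. We build the point chain backwards. Start with $y_n=y\in K_n$. Since $\sup_{b\in K_{j+1}} d(b,H_\cF(K_j))<\eps$, for each $y_{j+1}\in K_{j+1}$ there are an index $i_j$ and a point $y_j\in K_j$ with $d(f_{i_j}(y_j),y_{j+1})<\eps$. The resulting sequence $y_0,\dots,y_n$ is an $\eps$-chain for $\cF$ from some $y_0=x_\eps\in K$ to $y$.

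The main obstacle is that the starting point $x_\eps$ depends on $\eps$. We handle it by compactness of $K$. Choose $\eps_k\to0$ and pass to a subsequence with $x_{\eps_k}\to x\in K$. Each pair $(x_{\eps_k},y)$ has an $\eps_k$-chain but need not lie in $\cC$, so closedness of $\cC$ cannot be quoted directly. Instead we repeat the argument from the proof that $\cC$ is closed. Fix $\eps>0$ and pick $k$ large enough that $d(f_i(x),f_i(x_{\eps_k}))<\eps/2$ for all $i$ and $\eps_k<\eps/2$. Replacing the first point of the $\eps_k$-chain by $x$ gives an $\eps$-chain from $x$ to $y$. If the chain has length $0$, then $x_{\eps_k}=y$, and in the limit $x=y$. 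Since $\eps$ was arbitrary, $x\Cto y$ with $x\in K$.

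Applying the transfer statement in both directions gives $N_1\Cto N_2$ and $N_2\Cto N_1$. Hence $N_1$ and $N_2$ are chain-equivalent nodes, and therefore equal, contradicting the assumption that they are distinct.
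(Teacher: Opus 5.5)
Your proof is correct. Its core step, pulling an $\eps$-chain $K_0,\dots,K_n$ of $H_\cF$ back from the endpoint to a point $\eps$-chain $y_0,\dots,y_n$ of $\cF$, is the same as the paper's. You differ in how you handle the fact that the starting point $y_0$ depends on $\eps$.

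The paper takes no limit. It fixes an arbitrary $x\in N_1$ and notes that $y_0\in\Om(N_1)\subset N_1$ lies in the same chain node as $x$, so $x\Cto y_0$. Concatenating an $\eps$-chain from $x$ to $y_0$ with the pulled-back chain gives, for every $\eps$, an $\eps$-chain from the fixed point $x$ to $y$.

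You instead use compactness of $K$ to get $x_{\eps_k}\to x\in K$, and then re-run the argument behind closedness of $\cC$ to replace the first point of the chain by $x$.

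The paper's route is shorter and needs no compactness. However, it works only because the source set lies inside a single chain node. As written, the paper also states its observation for nodes $M,N$ and then silently applies it to the subsets $\Om(N_i)$. Your transfer statement is more general: it holds for arbitrary $K,L\in\KX$ with $K\Hto L$, and it applies directly to $\Om(N_1),\Om(N_2)$.

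One small tidy-up concerns the length-zero case. A length-zero $H_\cF$-chain forces $K=L$, and then you can simply take $x=y$ from the outset. Your phrase ``in the limit $x=y$'' is justified only if length zero occurs for infinitely many $k$, which is the case split used in the paper's closedness proof.
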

\begin{proof}
Let \(M,N\) be chain nodes of
$\cF$, regarded as compact subsets of \(X\). 
If $M\Hto N$, then $M\Cto N$.
Indeed, fix \(x\in M\), \(y\in N\), and \(\varepsilon>0\). 
Since \(M\Hto N\), there is an \(\varepsilon\)-chain in \(\mathcal K(X)\), 
\[
M=K_0,K_1,\ldots,K_r=N,
\]
from \(M\) to \(N\). 
If \(r=0\), then \(M=N\), and there is nothing to prove. 
Otherwise, starting from \(y_r=y\in K_r\) and using $d_H(H_F(K_{j-1}),K_j)<\varepsilon$, we choose inductively points \(y_{j-1}\in K_{j-1}\) and indices
\(i_{j-1}\in\{1,\ldots,m\}\) such that $d(f_{i_{j-1}}(y_{j-1}),y_j)<\varepsilon$ for $j=1,\ldots,r$.
Thus, $y_0,y_1,\ldots,y_r=y$ is an \(\varepsilon\)-chain for $\cF$ from some point \(y_0\in M\) to
\(y\). 
Since \(x,y_0\in M\) and \(M\) is a chain node, we also have \(x\Cto y_0\). 
Hence, there is an \(\varepsilon\)-chain from \(x\) to \(y_0\). Concatenating the two chains gives an \(\varepsilon\)-chain from \(x\) to \(y\). 
Since \(\varepsilon>0\) was
arbitrary, \(x\Cto y\). 
Therefore \(M\Cto N\).

Now assume, by contradiction, that
$\operatorname{Node}_{{\cal C}_{H_F}}(\Om(N_1))
=
\operatorname{Node}_{{\cal C}_{H_F}}(\Om(N_2)).
$
Then $\Om(N_1)\Hto\Om(N_2)$ and $\Om(N_2)\Hto\Om(N_1)$.
By the observation above, since $\Omega(N_1)\subset N_1$ and $\Om(N_2)\subset N_2$, $N_1\Cto N_2$ and $N_2\Cto N_1$.
Hence, any point of \(N_1\) is chain-equivalent to any point of \(N_2\).
Since chain nodes are maximal chain-equivalent subsets of \(\cR_\cC\), this
implies $N_1=N_2$, contrary to the assumption that the nodes are distinct. 
Therefore, the assignment $N\longmapsto \operatorname{Node}_{{\cal C}_{H_\cF}}(\Om(N))$ must be injective on the set of chain nodes of $\cF$.
\end{proof}


\begin{theorem}
\label{thm: H one node => F one node}
    Assume that $\cF$ has compact dynamics.
    Then the chain graph of $H_\cF$ has at least as many nodes as the chain graph of $\cF$.
    In particular, if $H_\cF$ has a unique chain node, then so does $\cF$.
\end{theorem}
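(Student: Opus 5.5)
The proof will build an injective map from the set of chain nodes of $\cF$ into the set of chain nodes of $H_\cF$. Such a map immediately gives the cardinality inequality, including the case of continuously many nodes, since injectivity is all a comparison of cardinalities needs. Let $N$ be a chain node of $\cF$. By Theorem~\ref{thm: N is H-invariant}, $\Om(N)$ is a non-empty compact $H$-invariant subset of $N$. Hence $\Om(N)\in\KX$ and $H_\cF(\Om(N))=\Om(N)$, so $\Om(N)$ is a fixed point of the discrete semiflow $H_\cF$ on $\KX$.

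The first step is to check that a fixed point of $H_\cF$ is chain-recurrent for $H_\cF$. Its orbit under $H_\cF$ is the singleton $\{\Om(N)\}$, and it is trivially chain-equivalent to itself. So $\Om(N)$ lies in some chain node of $H_\cF$, and $\Node_{{\cal C}_{H_\cF}}(\Om(N))$ is well defined. This uses the fact that the chain-recurrence definitions apply verbatim to $H_\cF$ viewed as an IFS with $m=1$.

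The second step is injectivity, which is exactly Lemma~\ref{prop: N->Node(N) is injective}. If $N_1\neq N_2$, then the images $\Node_{{\cal C}_{H_\cF}}(\Om(N_1))$ and $\Node_{{\cal C}_{H_\cF}}(\Om(N_2))$ are distinct. Therefore $N\mapsto\Node_{{\cal C}_{H_\cF}}(\Om(N))$ is an injection, and the chain graph of $H_\cF$ has at least as many nodes as that of $\cF$.

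For the ``in particular'' part, suppose $H_\cF$ has exactly one chain node. Then $\cF$ has at most one chain node, by injectivity. On the other hand, $\cF$ has at least one node, because $\cR_\cC\neq\varnothing$ under compact dynamics by Proposition~\ref{prop: at least one node}. Hence $\cF$ has exactly one chain node.

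The main point requiring care is that the map is well defined, namely that $\Om(N)$ really belongs to $\KX$ and is chain-recurrent for $H_\cF$. Non-emptiness and compactness both come from Theorem~\ref{thm: N is H-invariant}. Once those are in hand, chain-recurrence follows immediately because $\Om(N)$ is fixed.
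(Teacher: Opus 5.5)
Your proposal is correct and follows the paper's own route. The paper likewise sends each chain node $N$ of $\cF$ to $\Node_{{\cal C}_{H_\cF}}(\Om(N))$, gets injectivity from Lemma~\ref{prop: N->Node(N) is injective}, and uses Proposition~\ref{prop: at least one node} to guarantee at least one node; your explicit check that the fixed point $\Om(N)$ is chain-recurrent for $H_\cF$, so that the map is well defined, is a step the paper leaves implicit.
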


\begin{proof}
    By Proposition~\ref{prop: at least one node}, $\cC$ has at least one node.
    By Lemma~\ref{prop: N->Node(N) is injective}, if $H_\cF$ has a single node then $\cF$ cannot have more than one node.
\end{proof}

We show with an example that the inverse does not hold.
Let \(X=\{0,1\}\) with the discrete metric, and let \(\cF=\{f,g\}\), where $f(0)=0$, $f(1)=1$, $g(0)=1$, $g(1)=1$.
For sufficiently small \(\varepsilon\), chains are exact. The only chain node of \(\cF\) is \(\{1\}\), since \(0\) has \(1\) in its orbit but there is no chain from \(1\) back to \(0\).
On the other hand, $\mathcal K(X)=\{\{0\},\{1\},\{0,1\}\}$,
and the Hutchinson map satisfies
\[
H_\cF(\{0\})=\{0,1\},\qquad
H_\cF(\{1\})=\{1\},\qquad
H_\cF(\{0,1\})=\{0,1\}.
\]
Thus \(H_\cF\) has two chain nodes, namely the fixed points \(\{1\}\) and \(\{0,1\}\). 


\section{Example: the Levitt-Yoccoz gasket.}
\label{sec: cubic gasket}
Set $e_A=(0,0,1)$, $e_B=(1,0,1)$, $e_C=(1/2,\sqrt{3}/2,1)$ and consider the linear maps $\ell_P$, $P=A,B,C$, defined by 
$$
\ell_A(e_A)=e_A,\;
\ell_A(e_B)=e_A+e_B,\;
\ell_A(e_C)=e_A+e_C
$$
and similarly for $\ell_B,\ell_C$.
Let $X\subset\bR{\text{P}}^2$ be the closed triangle having vertices $A=[e_A], B=[e_B],C=[e_C]$ and containing the point $[e_A+e_B+e_C]$ and let $\cF=\{f_A,f_B,f_C\}$,
where $f_P$ is the projective automorphism induced by $\ell_P$.
By construction, $\ell_P(X)\subset X$ for $P=A,B,C$ and so $\cF$ is a projective IFS on $X$.
In the projective chart $z=1$, the vertices write $A=(0,0), B=(1,0),C=(1/2,\sqrt{3}/2)$
and the maps write
$$
f_A(x,y) = \left(\frac{x}{x+y/\sqrt{3}+1},\frac{y}{x+y/\sqrt{3}+1}\right),
$$
$$
f_B(x,y)=\left(\frac{y/\sqrt{3}+1}{-x+y/\sqrt{3}+2},\frac{y}{-x+y/\sqrt{3}+2}\right),
$$
$$
f_C(x,y)=\left(\frac{2y/\sqrt{3}-2x-1}{4(y/\sqrt{3}-1)},\frac{-\sqrt{3}}{4(y/\sqrt{3}-1)}\right).
$$
By construction, $f_P(P)=P$ for $P=A,B,C$ and a direct computation shows that each of these maps is a contraction at every point except at the fixed vertex.
Hence, this is not a contractive IFS.

\begin{figure}
    \centering
    \includegraphics[width=0.6\linewidth]{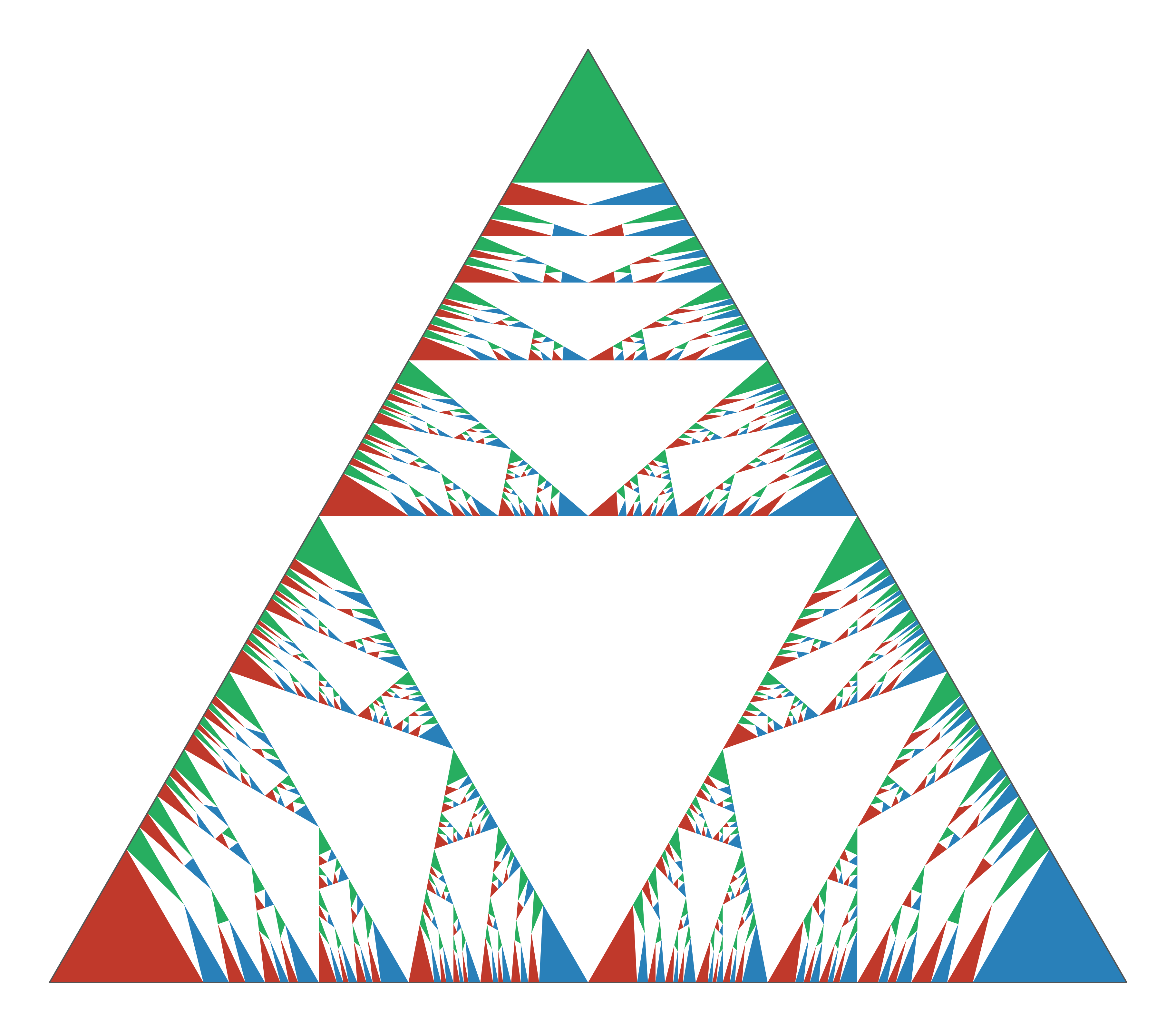} 
     \caption{{\BF The Levitt-Yoccoz gasket.}}
    \label{fig: logistic gaskets}
\end{figure}

This IFS was first introduced in literature by the present author in a 2008 preprint~\cite{DL08}, later refined in a 2012 preprint~\cite{DL12} whose main results were then published in~\cite{DL15} and~\cite{DL15b}.
It was later also studied by Arnoux and Starosta~\cite{AS13}, who refer to it as the ``Rauzy gasket''.
The invariant set of this IFS had already appeared in a 1993 article by Levitt about the dynamics of rotations pseudogroups~\cite{Lev93}, with a proof by Yoccoz that the measure of this set is zero.
This gasket is related to the topology of plane sections of the cubic polyhedron $\{4, 6|4\}$ (see~\cite{DD09} for details). 

Below we prove that $H_\cF$ has a single fixed point $K\in\KX$ and so that $\Gamma_{H_\cF}$ has a single node (and so no edge).
This node is precisely the fixed point $K$.
By Theorem~\ref{thm: H one node => F one node}, this also implies that $\cF$ has a single node, which is again the set $K$, this time seen as a subset of $X$.

The following standard consequence of the weakly contractive IFS theory is recalled for convenience.

\begin{proposition}
\label{thm: cubic gasket}
    Assume that $X$ is compact and each generator $f_k$ of $\cF$ is strictly distance-decreasing, namely
    $
    d(f_k(x),f_k(y))<d(x,y)$ for every $x,y\in X$ with $x\neq y$.
    Then:
    \begin{enumerate}
    \item $H_\cF$ is strictly distance-decreasing on $\KX$, i.e.
    $
    d_H(H_\cF(A),H_\cF(B))<d_H(A,B)
    $
    for all $A,B\in\KX$ with $A\neq B$;
\item $H_\cF$ has a unique fixed point $K_*\in \KX$;
\item for every $K\in \KX$ one has
$
H_\cF^k(K)\to K_*\text{ as }
k\to\infty.
$
\end{enumerate}
\end{proposition}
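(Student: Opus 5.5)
We will prove the three items in order, since each uses the previous one.

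\emph{Item 1.} Fix $A\neq B$ in $\KX$ and set $r=d_H(A,B)>0$. We show that for each $i$ and each $a\in A$ we have $d(f_i(a),H_\cF(B))<r$, and symmetrically with $A$ and $B$ exchanged. Since $B$ is compact, there is $b\in B$ with $d(a,b)=d(a,B)\le r$. If $a\ne b$, then $d(f_i(a),f_i(b))<d(a,b)\le r$. If $a=b$, the distance is $0<r$. In both cases $d(f_i(a),H_\cF(B))<r$.

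The function $a\mapsto d(f_i(a),H_\cF(B))$ is continuous on the compact set $A$, so its supremum is attained and is therefore strictly less than $r$. Taking the maximum over the finitely many indices $i$, and doing the same with $A,B$ exchanged, gives $d_H(H_\cF(A),H_\cF(B))<r$. The only delicate point is that strictness survives the supremum, and this is exactly what compactness of $A$ together with finiteness of $\cF$ provides.

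\emph{Items 2 and 3.} Since $X$ is compact, $\KX$ is compact as well (stated earlier in the paper). $H_\cF$ is continuous, indeed $1$-Lipschitz by item 1.

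For existence of a fixed point, consider the continuous function $\phi(K)=d_H(H_\cF(K),K)$ on $\KX$. It attains its minimum at some $K_*$. If $\phi(K_*)>0$, then $H_\cF(K_*)\neq K_*$, and item 1 applied to the pair $H_\cF(K_*),K_*$ gives $\phi(H_\cF(K_*))<\phi(K_*)$, contradicting minimality. Hence $H_\cF(K_*)=K_*$.

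For uniqueness, suppose $K_*\neq K'$ are both fixed. Item 1 then gives $d_H(K_*,K')<d_H(K_*,K')$, which is absurd.

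For item 3, fix $K\in\KX$ and let $K_k=H_\cF^k(K)$. The sequence $a_k=d_H(K_k,K_*)$ is non-increasing, because $K_*$ is fixed and $H_\cF$ is non-expansive. So $a_k\to a\ge0$. Suppose $a>0$. By compactness of $\KX$, some subsequence $K_{k_j}$ converges to some $L$. By continuity,
\[
d_H(L,K_*)=\lim_j a_{k_j}=a \quad\text{and}\quad d_H(H_\cF(L),K_*)=\lim_j a_{k_j+1}=a .
\]
But $L\neq K_*$ since $a>0$, so item 1 gives $d_H(H_\cF(L),K_*)=d_H(H_\cF(L),H_\cF(K_*))<a$, a contradiction. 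Therefore $a=0$, that is, $H_\cF^k(K)\to K_*$.

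This is the standard Edelstein-type argument. The only real obstacle is the strict inequality in item 1; items 2 and 3 are then routine compactness arguments.
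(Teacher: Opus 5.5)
Your proof is correct and follows the same structure as the paper's: first show that $H_\cF$ strictly decreases $d_H$ on $\KX$ by a compactness argument, then apply an Edelstein-type argument on the compact space $\KX$. The differences are minor. You make the strict inequality uniform by maximizing $a\mapsto d(f_i(a),H_\cF(B))$ over the compact set $A$, so item 1 does not even need $X$ compact, whereas the paper maximizes $d(f_k(x),f_k(y))$ over the compact set $\{(x,y): d(x,y)\le h\}\subset X\times X$. You also prove Edelstein's theorem inline, where the paper simply cites it.
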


\begin{proof}
Fix two distinct compact sets $A,B\in \KX$ and set
$
h=d_H(A,B)>0.
$
For each $k=1,\dots,m$ consider the compact set
$
E_h=\{(x,y)\in X\times X:\ d(x,y)\le h\}.
$
The function
$
\varphi_k:E_h\to\mathbb R$, $\varphi_k(x,y)=d(f_k(x),f_k(y))$$,
$
is continuous. 
By hypothesis, for every $(x,y)\in E_h$ with $x\neq y$,
we have that $\varphi_k(x,y)<d(x,y)\le h$ and if $x=y$ then $\varphi_k(x,y)=0<h$. Hence $\varphi_k(x,y)<h$ for all $(x,y)\in E_h.$
Since $E_h$ is compact, $\varphi_k$ attains a maximum on $E_h$, so there exists $m_k<h$ such that
$
\varphi_k(x,y)\le m_k$ for all $(x,y)\in E_h.$

It follows that
$
d_H(f_k(A),f_k(B))\le m_k<h.
$
Indeed, let $a\in A$. 
Since $d_H(A,B)=h$, there exists $b\in B$ such that
$
d(a,b)\le h.
$
Hence $(a,b)\in E_h$, so
$
d(f_k(a),f_k(b))\le m_k.
$
Therefore every point of $f_k(A)$ lies within distance $m_k$ of $f_k(B)$.
By symmetry, every point of $f_k(B)$ lies within distance $m_k$ of $f_k(A)$.
Thus
$
d_H(f_k(A),f_k(B))\le m_k<h.
$

Now, $H_\cF(A)=\bigcup_{k=1}^m f_k(A)$ and $H_\cF(B)=\bigcup_{k=1}^m f_k(B)$,
so
$$
d_H(H_\cF(A),H_\cF(B))
\le
\max_{1\le k\le n} d_H(f_k(A),f_k(B))
\le
\max_{1\le k\le n} m_k
<h.
$$
Therefore, $d_H(H_\cF(A),H_\cF(B))<d_H(A,B)$ for all $A\neq B$, i.e.
$H$ is strictly distance-decreasing on $\KX$.

Since $X$ is compact, $\KX$ is compact as well. 
By Edelstein's fixed point theorem~\cite{Ede62}, a strictly distance-decreasing self-map of a compact metric space has a unique fixed point and every orbit converges to it. 
Therefore $H_\cF$ has a unique fixed point $K_*$ and $H_\cF^k(K)\to K_*$ for $k\to\infty$ for every $K\in \KX$.
\end{proof}

In order to use Proposition~\ref{thm: cubic gasket}, we need a criterion to show that the maps of this gasket are strictly distance decreasing.
The following result provides this criterion.

\begin{proposition}
    Let \(X\subset\mathbb R^n\) be compact and convex, and let \(f:X\to X\) be the restriction of a \(C^1\) map defined on a neighbourhood of \(X\). 
    Assume that $\|Df(x)\|\le 1$ for all $x\in X$ and that the equality set $E=\{x\in X:\|Df(x)\|=1\}$ is finite. 
    Then $d(f(x),f(y))<d(x,y)$ for $x\neq y$.
\end{proposition}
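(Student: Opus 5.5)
The plan is to integrate along the segment joining $x$ and $y$, which lies in $X$ by convexity. Fix $x\neq y$ in $X$ and set $\gamma(t)=x+t(y-x)$ for $t\in[0,1]$. Since $f$ is $C^1$ on a neighbourhood of $X$, the fundamental theorem of calculus gives
\[
f(y)-f(x)=\int_0^1 Df(\gamma(t))\,(y-x)\,dt ,
\]
and therefore
\[
d(f(x),f(y))\le \int_0^1 \|Df(\gamma(t))\|\,\|y-x\|\,dt .
\]
The hypothesis $\|Df\|\le 1$ on $X$ immediately yields $d(f(x),f(y))\le d(x,y)$. It remains to show that the inequality is strict.

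For strictness, the key observation is that the segment $\gamma([0,1])$ meets the finite set $E$ in at most finitely many points. Hence the set $T=\{t\in[0,1]:\gamma(t)\in E\}$ is finite, since $\gamma$ is injective because $x\neq y$. Pick any $t_0\in[0,1]\setminus T$. Then $\|Df(\gamma(t_0))\|<1$. The map $t\mapsto \|Df(\gamma(t))\|$ is continuous because $f$ is $C^1$, so there is an interval $J\ni t_0$ of positive length on which $\|Df(\gamma(t))\|\le c<1$. Splitting the integral over $J$ and $[0,1]\setminus J$ gives
\[
d(f(x),f(y))\le \bigl(c|J|+(1-|J|)\bigr)\,d(x,y)<d(x,y).
\]

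There is no real obstacle here; the only points requiring care are the following. First, convexity is what guarantees that the segment stays inside $X$, where the bound on $Df$ is assumed. Second, the operator norm $\|Df\|$ must be the one induced by the Euclidean norm, so that the integral estimate is valid. Third, one needs continuity of $\|Df\|$ to pass from a single good point $t_0$ to an interval of positive measure. Finiteness of $E$ is in fact more than is needed: it suffices that $E$ contains no nondegenerate segment.
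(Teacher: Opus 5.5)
Your proof is correct and follows the same route as the paper: you integrate $\|Df\|$ along the segment from $x$ to $y$, which lies in $X$ by convexity, and use that this segment meets $E$ in only finitely many points. The only difference is minor: you use continuity of $t\mapsto\|Df(\gamma(t))\|$ to obtain an interval on which the norm is at most some $c<1$, whereas the paper argues directly that a function that is $\le 1$ everywhere and $<1$ almost everywhere has integral strictly less than $1$.
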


\begin{proof}
    Let \(x\neq y\) and let $\gamma(t)=(1-t)x+ty$, $0\le t\le1$, be the straight segment from \(x\) to \(y\). 
    Since \(X\) is convex, \(\gamma\subset X\). 
    Then 
    $$
    d(f(x),f(y))\le \ell(f\circ\gamma)=\int_0^1 \|Df(\gamma(t))(y-x)\|\,dt\le\|y-x\|\int_0^1 \|Df(\gamma(t))\|\,dt.
    $$
    Since the segment \(\gamma\) meets the finite set \(E\) in only finitely many points, it follows that $\|Df(\gamma(t))\|<1$ for almost every \(t\in[0,1]\), while \(\|Df(\gamma(t))\|\le1\) for all \(t\). 
    Therefore
    $\int_0^1 \|Df(\gamma(t))\|\,dt<1,$ and so $d(f(x),f(y))<d(x,y)$.
\end{proof}



\begin{theorem}
    \label{thm: LY}
    Let $\cF$ be the Levitt-Yoccoz gasket.
    Then:
    \begin{enumerate}
        \item $H_\cF$ has a unique fixed point $K\in\KX$;
        \item ${\cal G}_{H_\cF}={\cR}_{{\cal C}_{H_\cF}}=\{K\}$;
        \item $\Gl=\cR_\cC=K$;
        \item $K$ is chain-equivalent for $\cF$, i.e. the whole chain-recurrent set of $\cF$ is a single node.
    \end{enumerate}
    In particular, the graphs of $\cF$ and $H_\cF$ have a single node and no edge.
\end{theorem}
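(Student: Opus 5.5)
The plan is to reduce everything to Proposition~\ref{thm: cubic gasket} applied to $\cF$ on the compact convex triangle $X$ (in the chart $z=1$, with the Euclidean metric), then transfer the conclusions to $\cF$ through the results of Sections~\ref{sec: chains} and~\ref{sec: Hutch}.

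The first step, and the main obstacle, is to verify that each $f_P$ is strictly distance-decreasing. I would use the $C^1$ criterion proved just above: each $f_P$ is a projective map that is smooth on a neighbourhood of $X$, since the denominators do not vanish there. One must check that $\|Df_P(x)\|\le 1$ on $X$, with equality only at the fixed vertex $P$. The paper asserts that ``each of these maps is a contraction at every point except at the fixed vertex''; I would make this precise by computing $Df_A$. At $A=(0,0)$ it is the identity, and elsewhere the denominator $x+y/\sqrt3+1>1$ gives operator norm $<1$; this needs an explicit estimate of the singular values of the $2\times2$ Jacobian. Then $f_B,f_C$ reduce to $f_A$ by the symmetry of the construction: a Euclidean isometry of the triangle permuting the vertices conjugates the maps. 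So $E=\{P\}$ is finite, and the criterion yields strict distance decrease.

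With that, Proposition~\ref{thm: cubic gasket} gives item~1: $H_\cF$ has a unique fixed point $K$ and $H_\cF^k(L)\to K$ for every $L\in\KX$. For item~2, note that $\{K\}$ is compact, invariant and attracts compact subsets of $\KX$; indeed, by Dini/Edelstein-type uniformity on the compact space $\KX$, the convergence is uniform, since strict contraction on a compact space gives $\sup_L d_H(H^k L,K)\to 0$. Hence $\mathcal G_{H_\cF}=\{K\}$. Applying Theorem~\ref{prop: C = C glob} to $H_\cF$, whose chain-recurrent set lies in $\mathcal G_{H_\cF}$, and Proposition~\ref{prop: at least one node}, gives $\cR_{{\cal C}_{H_\cF}}=\{K\}$.

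For item~3, Theorem~\ref{thm: F comp dyn iff H comp dyn} shows that $\cF$ has compact dynamics (which is automatic here anyway, since $X$ is compact) and that $\Gl=\bigcup_{A\in\{K\}}A=K$. For item~4, Theorem~\ref{thm: H one node => F one node} gives exactly one chain node $N$ of $\cF$. I would then show $N=K$. First, $\cR_\cC\subset\Gl=K$ by Theorem~\ref{prop: C = C glob}. Conversely, every $x\in K$ is chain-recurrent and chain-equivalent to every $y\in K$: by Proposition~\ref{thm: cubic gasket}, $H^k_\cF(\{z\})\to K$ for any $z$, so $\Om(z)\supset K$. Indeed, for any $y\in K$ and large $k$ some word of length $k$ sends $z$ close to $y$, which yields an exact orbit followed by one small jump, i.e.\ $z\Cto y$. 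Taking $z\in\cO_\cF(x)$ gives $z\Cto x$, so $x\in\cR_\cC$, and taking $z=x$ gives $x\Cto y$. Thus $\cR_\cC=K$ is a single node, so both graphs have one node and no edge.
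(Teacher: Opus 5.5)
Your proposal is correct. For items 1--3 it follows the paper's route: strict distance decrease of each $f_P$ via the $C^1$ criterion, then Proposition~\ref{thm: cubic gasket}, then the results of Section~\ref{sec: Hutch}. You also fill in two points the paper passes over: the Dini-type uniformity showing that $\{K\}$ really attracts compact subsets of $\KX$, and the inclusion $\cR_{{\cal C}_{H_\cF}}\subset\mathcal G_{H_\cF}$.

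The real difference is in item 4. The paper gets a single node $N$ from Theorem~\ref{thm: H one node => F one node} and identifies it through Theorem~\ref{thm: N is H-invariant}. Since $\Om(N)$ is a non-empty compact $H$-invariant subset of $N$, it must equal $K$, so $K=\Om(N)\subset N\subset\Gl=K$. You instead use $H_\cF^k(\{z\})\to K$ for singletons to get $K\subset\Om(z)$ for every $z$. You then turn this into exact-orbit-plus-one-jump chains, which shows directly that $K\subset\cR_\cC$ and that $K$ is chain-equivalent. Together with $\cR_\cC\subset\Gl=K$ this finishes the proof, so your appeal to Theorem~\ref{thm: H one node => F one node} is actually superfluous.

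The paper's argument is the one that showcases the injectivity result of Section~\ref{sec: Hutch}. Yours is self-contained and does not need Theorem~\ref{thm: N is H-invariant}, whose forward-invariance part the paper leaves to the reader. It also gives the extra information $\Om(z)=K$ for every $z\in X$, exactly as in the contractive case.

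One caution on the Jacobian step, which the paper also leaves to the reader. We have $Df_A(v)=D^{-1}\bigl(I-D^{-1}v\,a^{T}\bigr)$ with $v=(x,y)^T$, $a=(1,1/\sqrt3)^T$ and $D=1+a\cdot v$. Because of the rank-one term, $D>1$ alone does not give $\|Df_A\|<1$. What makes the estimate work is that every $v\in X$ makes an angle $\theta\le 30^\circ$ with $a$; near $A$ one has $\|Df_A(v)\|=1-\tfrac12|a|\,|v|\,(3\cos\theta-1)+O(|v|^2)$. Your symmetry reduction of $f_B,f_C$ to $f_A$ by a Euclidean isometry of the triangle is correct.
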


\begin{proof}
    Since $X$ is compact, $\cF$ has compact dynamics.
    The reader can verify that the Jacobian of the map $f_P$, with $P=A,B,C$, of this gasket has operator norm smaller than 1 at every point but $P$, where the norm is equal to 1.
    Hence, each of them is distance-decreasing and so, by Proposition~\ref{thm: cubic gasket}, (1) holds and $H^n_\cF(A)\to K$ for every $A\in\KX$.
    In turn, this implies that ${\cal G}_{H_\cF}=\{K\}$ and so (2) holds. 
    By Theorem~\ref{thm: H one node => F one node}, $\cF$ has at most one node.
    Denote by $N$ this node. 
    Since $K$ is the only compact non-empty $H$-invariant subset of $X$, the only possibility is that $\Om(N)=K\subset N\subset\Gl=K$, so $N=K$. Hence, (3) and (4) hold.
\end{proof}

\bibliographystyle{amsplain}  
\bibliography{refs}

\end{document}